\newcommand{\Z}{\mathbb{Z}}
\newcommand{\N}{\mathbb{N}}
\newcommand{\B}{\mathbb{B}}
\newcommand{\grate}{\mathbf{h}}
\newcommand\prim{\mathsf{Prim}}
\newcommand\cycrep{\mathsf{CycRep}}
\newcommand\cycsl{\mathsf{CycSL}}
\newcommand\sphl{\mathsf{SL}} 
\newcommand\supp{\mathsf{Supp}}
\renewcommand{\le}{\leqslant}
\renewcommand{\ge}{\geqslant}
\theoremstyle{plain}
\newtheorem{thm}{Theorem}[section]
\newtheorem{lem}[thm]{Lemma}
\newtheorem{prop}[thm]{Proposition}
\newtheorem{cor}[thm]{Corollary}
\newtheorem{quR}{Question}
\newtheorem*{rep@theorem}{\rep@title}
\newcommand{\newreptheorem}[2]{%
\newenvironment{rep#1}[1]{%
 \def\rep@title{#2 \ref{##1}}%
 \begin{rep@theorem}}%
 {\end{rep@theorem}}}
\newtheorem*{thm*}{Theorem}
\newtheorem*{lem*}{Lemma}
\newtheorem*{prop*}{Proposition}
\newtheorem*{cor*}{Corollary}
\newtheorem*{qu*}{Question}
\newtheorem*{dt*}{Definition and Theorem}
\newtheorem*{not*}{Notation}
\newtheorem*{exmp*}{Example}
\newtheorem*{exmps*}{Examples}
\newtheorem*{dprop*}{Definition and Proposition}
\newtheorem*{conj*}{Conjecture}
\theoremstyle{definition}
\newtheorem{defn}[thm]{Definition}
\newtheorem{exmp}[thm]{Example}
\newtheorem*{defn*}{Definition}
\newtheorem{conj}[thm]{Conjecture}
\theoremstyle{plain}
\newtheorem{rem}[thm]{Remark}
\newtheorem*{rem*}{Remark}
\DeclareMathOperator\dc{dc}
\DeclareMathOperator\CR{cr}
\begin{document}
\title{The conjugacy ratio of groups}

\author{Laura Ciobanu}
\address{Heriot-Watt University, Edinburgh, EH14 4AS, UK}
\email{l.ciobanu@hw.ac.uk}
\urladdr{http://www.macs.hw.ac.uk/~lc45/}

\author{Charles Garnet Cox}
\address{University of Bath, BA2 7AY, UK}
\email{cpgcox@gmail.com}

\author{Armando Martino}
\address{Mathematical Sciences, University of Southampton, SO17 1BJ, UK}
\email{A.Martino@soton.ac.uk}
\urladdr{http://www.personal.soton.ac.uk/am1t07/}

\thanks{}

\subjclass[2010]{20P05, 20F69}

\keywords{Conjugacy growth, degree of commutativity, polynomial growth, RAAGs, hyperbolic groups, wreath products}
\date{\today}
\begin{abstract}
In this paper we introduce and study the conjugacy ratio of a finitely generated group, which is the limit at infinity of the quotient of the conjugacy and standard growth functions. We conjecture that the conjugacy ratio is $0$ for all groups except the virtually abelian ones, and confirm this conjecture for certain residually finite groups of subexponential growth, hyperbolic groups, right-angled Artin groups, and the lamplighter group.
\end{abstract}
\maketitle

\section{Introduction}
In this paper we introduce and study the conjugacy ratio of a group, which is the limit of the quotient of two functions naturally associated to any finitely generated group: conjugacy growth and standard growth. More precisely, if $G$ is generated by the finite set $X$, let $\B_{G,X}(n)$ denote the ball of radius $n$ with respect to $X$, and let $C_{G,X}(n)$ denote the set of conjugacy classes of $G$ which have a representative in $\B_{G,X}(n)$. Then the {\em conjugacy ratio} of $G$ with respect to $X$ is:
\begin{align}\label{dcalt}
\CR_X(G) = \limsup_{n\rightarrow\infty} \frac{|C_{G,X}(n)|}{|\B_{G,X}(n)|}.
\end{align} 

The motivation of this paper is twofold. On one hand, the conjugacy ratio of a finite group $H$ is equal to the degree of commutativity $\dc(H)$ of $H$, which measures the probability that two elements of the group commute, and is defined as:
\begin{align}\label{dcfinite}
\dc(H)=\frac{|\{ (x,y) \in H \times H \ : \ xy=yx \}|}{|H|^2}.
\end{align}
The degree of commutativity of a group has received a lot of attention recently, as its definition was extended to finitely generated infinite groups in \cite{dcA} to be
\begin{align*}
\dc_X(G)=\limsup_{n\rightarrow\infty}\frac{|\{(x, y) \in \B_{G,X}(n)^2 :  ab=ba\}|}{|\B_{G,X}(n)|^2}.
\end{align*}
As raised in \cite{Cox3}, it is natural to explore whether the degree of commutativity and the conjugacy ratio are related for infinite groups as well. 

Our second motivation comes from the fact that very few quantitative results comparing standard and conjugacy growth in groups exist in the literature. While in any group there are fewer conjugacy classes than elements, the gap between these two functions has not been been explored in detail, and it is worth investigating. For example, the standard and conjugacy growth rates (i.e. taking the limit of the $n$th root of the function at $n$) are equal in some of the most frequently encountered families of infinite groups: hyperbolic groups \cite{AC17}, graph products \cite{CHM17}, many wreath products \cite{VM17}; thus in these examples the quotient of the two functions, as a function of $n$, must be at most subexponential, and if the conjugacy ratio is $0$, the convergence to $0$ will not be very fast.

Our starting point is the following conjecture, inspired by \cite[Conj. 1.6]{dcA}. 
\begin{conj}
\label{maincon}
Let $G$ be a group generated by a finite set $X$. Then $\CR_X(G) > 0$ if and only if $G$ is virtually abelian.
\end{conj}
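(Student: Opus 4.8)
\emph{Towards the conjecture.} The statement is an equivalence whose two halves are of very different difficulty; I would prove the first outright and attack the second class by class, as the abstract announces.

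\textbf{The implication ``$G$ virtually abelian $\Rightarrow\CR_X(G)>0$''.} Replacing a finite-index abelian subgroup by its normal core, we get a finite-index \emph{normal} abelian subgroup $A\trianglelefteq G$, say of index $k$. Every $a\in A$ satisfies $A\le C_G(a)$, so the conjugacy class of $a$ has at most $k$ elements, and by normality it is contained in $A$. Hence partitioning $\B_{G,X}(n)\cap A$ into conjugacy classes requires at least $|\B_{G,X}(n)\cap A|/k$ of them, each with a representative in $\B_{G,X}(n)$, so $|C_{G,X}(n)|\ge|\B_{G,X}(n)\cap A|/k$. A routine coset computation together with the polynomial growth of $G$ gives $|\B_{G,X}(n)\cap A|\ge c\,|\B_{G,X}(n)|$ for some $c>0$ and all large $n$, and therefore $\CR_X(G)\ge c/k>0$.

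\textbf{The implication ``$\CR_X(G)>0\Rightarrow G$ virtually abelian''.} This is the substantive and, in general, open part. The common first move is a class-equation average: from $|\B_{G,X}(n)|=\sum_{[g]}|[g]\cap\B_{G,X}(n)|$ one sees that $\CR_X(G)>0$ forces, along a subsequence, a uniformly positive proportion of the conjugacy classes meeting $\B_{G,X}(n)$ to meet it in a bounded number of points. For residually finite groups of subexponential growth I would then follow \cite{dcA}: a small conjugacy class for $g$ means, via the orbit map $x\mapsto xgx^{-1}$, that balls around $g$ are covered by boundedly many left cosets of $C_G(g)$, so $C_G(g)$ has positive density; residual finiteness upgrades positive density to finite index, a pigeonhole argument makes these centralizer indices uniformly bounded, and a B.\,H.~Neumann-type argument finally produces a finite-index abelian subgroup. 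For the remaining (exponential-growth) classes the strategy is to show $\CR_X(G)=0$ directly from a count of conjugacy classes. For a non-elementary hyperbolic group $G$ (the only hyperbolic groups that are not virtually abelian) one has $|\B_{G,X}(n)|\asymp a^n$ with $a>1$ by Coornaert, while the estimates of \cite{AC17} show $|C_{G,X}(n)|\in o(|\B_{G,X}(n)|)$ — morally, an infinite-order element is counted once per necklace of cyclic conjugates, costing a factor $n$. For a right-angled Artin group $A_\Gamma$ that is not virtually abelian, $\Gamma$ is not complete, so $A_\Gamma$ contains $F_2$ and has exponential growth; using the conjugacy normal form for graph products of \cite{CHM17}, a generic element of $\B_{G,X}(n)$ is cyclically reduced and not supported on a single clique, so its length-preserving cyclic permutations give $\asymp n$ pairwise distinct elements of its conjugacy class inside $\B_{G,X}(n)$, whence $|C_{G,X}(n)|\lesssim|\B_{G,X}(n)|/n\to0$. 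For the lamplighter $\Z_2\wr\Z$ one computes that conjugation sends $(f,t)$ to $\big((1+x^t)g+x^sf,\,t\big)$, so the class of $(f,t)$ depends only on $t$ and on the image of $f$ in $\Z_2[x^{\pm1}]/(1+x^t)$ up to a cyclic shift — an exponential collapse, with an extra factor of order $n$ — and the explicit conjugacy growth of \cite{VM17} then yields $\CR_X(G)=0$.

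\textbf{Main obstacle.} The hard point is that there is no unified argument for the converse implication: the cases above rely either on residual finiteness (to pass from positive density of a centralizer to finite index) or on an explicit normal form and count for conjugacy classes. Groups of intermediate growth, and amenable groups without a tractable normal form, are not reached by any of these methods, and even removing the residual-finiteness hypothesis in the subexponential case appears to require a genuinely new idea; this is why the statement is posed as a conjecture rather than a theorem.
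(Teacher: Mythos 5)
The statement you are asked about is a conjecture, and you correctly treat it as such: the paper does not prove it in general, but establishes the forward implication for all groups and the converse for several classes. Your proof of ``virtually abelian $\Rightarrow \CR_X(G)>0$'' is essentially the paper's Proposition \ref{cr>0}; the only difference is that the paper does not need to pass to the normal core, since it observes directly that conjugation by any two elements of the same right coset of the abelian subgroup $A$ act identically on $A$, so each $a\in A$ has at most $[G:A]$ conjugates. The density statement $|A\cap\B_{G,X}(n)|\ge c\,|\B_{G,X}(n)|$ that you call a routine coset computation is exactly Proposition \ref{pepprop} (quoted from Burillo--Ventura), which gives the precise limit $1/[G:A]$ and hence the bound $\CR_X(G)\ge 1/m^2$.

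For the converse, your description of the residually finite subexponential case diverges from what the paper actually does, and as written it has a gap. The paper's mechanism is Proposition \ref{subexpprop}: for any finite-index normal $N$, $\CR_X(G)\le\dc(G/N)$, proved by lifting conjugacy in $G/N$ to conjugators of bounded length and counting cosets of $N$ in slightly larger balls; this is then iterated along a chain of characteristic subgroups using Gallagher's inequality $\dc(F)\le\dc(F/N)\dc(N)$ and Gustafson's $5/8$ bound to force $\CR_X(G)=0$. Your alternative route --- ``a positive proportion of classes meet $\B(n)$ in boundedly many points, hence centralizers have positive density, hence finite index by residual finiteness'' --- is the argument of \cite{dcA} for the degree of commutativity, and it does not transfer cleanly to the conjugacy ratio: a class that meets $\B_{G,X}(n)$ in few points need not be small globally, and the orbit map $x\mapsto xgx^{-1}$ sends $\B(m)$ into $[g]\cap\B(2m+|g|)$, so the bounded-fibre conclusion you want lives at a different scale from the hypothesis; moreover residual finiteness does not upgrade a positive-density subgroup to finite index without the equidistribution input that the paper gets from stable subexponential growth. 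Your hyperbolic and lamplighter sketches match the paper (Coornaert plus \cite{AC17}, and \cite{VM17} plus Parry respectively). For RAAGs, note that a generic element of $\B_{G,X}(n)$ is \emph{not} cyclically reduced, and that a non-complete graph can still give a direct product (e.g.\ $F_2\times\Z$), which is why the paper needs the induction on supports, Lemma \ref{directprod} with the convolution Propositions \ref{convolution} and \ref{convolution2}, and the M\"obius-inversion count of Proposition \ref{decycle} rather than a direct ``divide by $n$'' estimate on all of $\B_{G,X}(n)$.
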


Our results on the conjugacy ratio in several families of groups support Conjecture \ref{maincon}. In Section \ref{subexp} we investigate groups of stable subexponential growth (Definition \ref{stable}). We first show that any virtually abelian group has $\CR_X(G)>0$ for any finite generating set $X$. We then show that, if $N$ is a normal, finite index subgroup of $G$, then (for any finite generating set $X$ of $G$) $\CR_X(G)\le \dc(G/N)$. This allows us to apply a technique from \cite{dcA} to show that any residually finite group $G$ of stable subexponential growth which is not virtually abelian has $\CR_X(G)=0$ for any finite generating set $X$.
We also show in Theorem~\ref{indx} that if $G$ is a finitely generated virtually abelian group, with finite generating sets $X$ and $Y$, then $\CR_X(G) = \CR_Y(G)$.

We say that a group, $G$, with generating set $X$ has stable subexponential growth if $\lim_{n \to \infty} \frac{|\B_{G, X}(n+1)|}{|\B_{G, X}(n)|}=1$, Definition~\ref{stable}. This includes all finitely generated virtually-nilpotent groups. Since all finitely generated virtually-nilpotent groups are residually finite, the theorem below means that Conjecture \ref{maincon} is true for all groups of polynomial growth.

\begin{reptheorem}{thmsubsexp}
The conjugacy ratio for all finitely generated, residually finite groups of stable subexponential growth that are not virtually abelian is zero, with respect to all finite generating sets. 
\end{reptheorem}
The proof of Theorem \ref{thmsubsexp} cannot be generalised to groups of exponential growth, but we provide independent arguments for several important classes of groups of exponential growth.
\begin{reptheorem}{thm:hyperbolic}
Let $G$ be a non-elementary hyperbolic group. Then $\CR_X(G)= 0$ for any finite generating set $X$.
\end{reptheorem}
 
\begin{reptheorem}{thm:lamplighter}
Let $G$ be the lamplighter group, that is, the wreath product $C_2\wr\mathbb{Z}$. Then $\CR_X(G)= 0$ for the standard generating set $X$ (defined in (\ref{vecY})).
\end{reptheorem}

\begin{reptheorem}{thm:RAAG}
Let $G=(G_V, X_V)$ be a right-angled Artin group (RAAG) based on a graph $\Gamma=(V,E)$ with generating set $X_V$. Then $\CR_{X_V}(G)= 0$ unless $G$ is free abelian, in which case $\CR_{X_V}(G)=1$.
\end{reptheorem}

We may also consider the \textit{strict} or \textit{spherical} conjugacy ratio, where the counting is done in the sphere of radius $n$ rather than the ball of radius $n$, that is, we may take the ratio of the strict conjugacy growth function over the spherical growth function. More precisely, let $S_{G,X}(n)$ be the sphere of radius $n$ in the group $G$ with respect to finite generating set $X$, and let $C^s_{G,X}(n)$ be the  conjugacy classes that intersect $S_{G,X}(n)$ but not $\B_{G,X}(n-1)$, that is, those conjugacy classes with a minimal length representative in $S_{G,X}(n)$.  The spherical conjugacy ratio is then
\begin{align}\label{dcalt}
\CR^s_X(G) = \limsup_{n\rightarrow\infty} \frac{|C^s_{G,X}(n)|}{|S_{G,X}(n)|}.
\end{align} 

\begin{rem}\label{SC} By the Stolz-Ces\`aro theorem, anytime the spherical conjugacy ratio turns out to be a limit, the conjugacy ratio will be equal to this limit. In particular, if the spherical conjugacy ratio is $0$, then the conjugacy ratio is $0$.
\end{rem}



\section{Preliminaries}


Recall that for a finitely generated group, $G$, with generating set $X$, the exponential growth rate of $G$ with respect to $X$ is:
\begin{equation}\label{growth_rate}
Exp_X(G) = \lim_{n\rightarrow\infty} \sqrt[n]{ |\B_{G,X}(n)|}.
\end{equation}

\begin{defn}
	A group, $G$, with finite generating set $X$, is said to have exponential growth if $Exp_X(G) > 1$ and subexponential growth if $Exp_X(G) = 1$. This does not depend on the generating set, $X$.
\end{defn}

Additionally, for any $\epsilon > 0$, if $\lambda = Exp_X(G)$, then for sufficiently large $n$, 
$$
\lambda^n \leq |\B_{G,X}(n)| \leq (\lambda + \epsilon)^n. 
$$
Moreover, if we replace balls with spheres, we get the same limit and inequality.

We collect below a few results on convergence of series that will be relevant later.

\begin{thm}[Stolz-Ces\`aro]\label{SC}
Let $a_n, b_n$, $n\geq 1$ be two sequences with $b_n$ strictly increasing and divergent. If the lefthandside limit exists, $$\lim_{n\rightarrow\infty} \frac{a_{n+1}-a_n}{b_{n+1}-b_n}= l \implies \lim_{n\rightarrow\infty} \frac{a_n}{b_n}= l.$$
\end{thm}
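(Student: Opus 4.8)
The plan is to run the standard $\varepsilon$-argument: turn the hypothesis on the difference quotient into two-sided linear bounds on the increments $a_{n+1}-a_n$, telescope, and then divide by $b_m$, using the divergence of $(b_n)$ to absorb the finitely many "leftover" terms. Concretely, fix $\varepsilon > 0$ and choose $N$ with $\left|\frac{a_{n+1}-a_n}{b_{n+1}-b_n}-l\right| < \varepsilon$ for all $n \geq N$. Since $(b_n)$ is strictly increasing we have $b_{n+1}-b_n > 0$, so multiplying through is legitimate and gives $(l-\varepsilon)(b_{n+1}-b_n) < a_{n+1}-a_n < (l+\varepsilon)(b_{n+1}-b_n)$ for $n \geq N$.

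Next I would sum this over $n = N, N+1, \dots, m-1$ for $m > N$; both outer sides telescope, yielding $(l-\varepsilon)(b_m - b_N) < a_m - a_N < (l+\varepsilon)(b_m - b_N)$. Because $b_n \to \infty$, for all large $m$ we have $b_m > \max\{0, b_N\}$, so dividing by $b_m$ is safe and preserves the inequalities. Writing $\frac{a_m}{b_m} = \frac{a_N}{b_m} + \frac{a_m - a_N}{b_m}$, the second summand is trapped between $(l-\varepsilon)\bigl(1 - \tfrac{b_N}{b_m}\bigr)$ and $(l+\varepsilon)\bigl(1 - \tfrac{b_N}{b_m}\bigr)$. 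Letting $m \to \infty$, the quantities $\frac{a_N}{b_m}$ and $\frac{b_N}{b_m}$ tend to $0$ (fixed numerators over a divergent denominator), so $\limsup_{m} \frac{a_m}{b_m} \leq l+\varepsilon$ and $\liminf_{m}\frac{a_m}{b_m} \geq l-\varepsilon$; since $\varepsilon$ was arbitrary, $\lim_m \frac{a_m}{b_m} = l$.

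There is no serious obstacle here — the result is classical — but the one place that genuinely uses each hypothesis, and hence deserves attention, is the passage from $\frac{a_m - a_N}{b_m}$ to $\frac{a_m}{b_m}$: the single fixed term $\frac{a_N}{b_m}$ coming from the un-summed initial segment must be shown negligible, and this is exactly what the divergence of $(b_n)$ delivers (it is also why that assumption cannot be dropped). Strict monotonicity of $(b_n)$ is used only to guarantee $b_{n+1}-b_n>0$ so that the inequalities survive multiplication and division; nothing else is needed, and the argument makes no use of any group-theoretic input, so it transfers verbatim to the applications of this theorem in the sequel (e.g.\ Remark~\ref{SC}).
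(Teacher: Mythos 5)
Your proof is correct and is the standard $\varepsilon$-argument for Stolz--Ces\`aro: bound the increments, telescope, divide by $b_m$, and use divergence of $(b_n)$ to kill the initial-segment terms $\frac{a_N}{b_m}$ and $\frac{b_N}{b_m}$. The paper itself states this theorem without proof, as a classical preliminary, so there is nothing to compare against; your argument is the expected one and covers the finite-$l$ case, which is all that is used in the sequel.
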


Proposition \ref{SCconverse} is a partial converse to the Stolz-Ces\`aro theorem. It implies that for groups of exponential growth, if the conjugacy ratio is a limit and the ratio of sizes of consecutive balls has a limit, then the spherical conjugacy ratio is equal to the conjugacy ratio. 

\begin{prop}\label{SCconverse}
	Let $a_n, b_n$, $n\geq 1$ be two sequences with $b_n$ strictly increasing and divergent, such that the lefthandside limit exists and $\lim_{n\rightarrow\infty} \frac{b_{n+1}}{b_n} \neq 1$. Then $$\lim_{n\rightarrow\infty} \frac{a_n}{b_n}= l \implies \lim_{n\rightarrow\infty} \frac{a_{n+1}-a_n}{b_{n+1}-b_n}= l.$$
\end{prop}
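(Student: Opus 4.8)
The plan is to exploit the hypothesis $\lim_{n\to\infty} b_{n+1}/b_n = r$ for some $r \neq 1$ (which must exist and be finite-or-infinite; since $b_n$ is strictly increasing and divergent, $r \geq 1$, so in fact $r > 1$, possibly $r = \infty$). Write $c_n = a_n/b_n$, so $c_n \to l$ by assumption, and $a_n = c_n b_n$. Then compute directly
\begin{align*}
\frac{a_{n+1} - a_n}{b_{n+1} - b_n} = \frac{c_{n+1} b_{n+1} - c_n b_n}{b_{n+1} - b_n} = \frac{c_{n+1}(b_{n+1} - b_n) + (c_{n+1} - c_n) b_n}{b_{n+1} - b_n} = c_{n+1} + (c_{n+1} - c_n)\frac{b_n}{b_{n+1} - b_n}.
\end{align*}
Since $c_{n+1} \to l$, it suffices to show the second term tends to $0$. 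We have $\frac{b_n}{b_{n+1} - b_n} = \frac{1}{b_{n+1}/b_n - 1} \to \frac{1}{r - 1}$, which is a finite constant (interpreting it as $0$ if $r = \infty$); in particular $\frac{b_n}{b_{n+1}-b_n}$ is a bounded sequence. Since $c_{n+1} - c_n \to l - l = 0$, the product $(c_{n+1} - c_n)\frac{b_n}{b_{n+1}-b_n}$ tends to $0$, and the result follows.

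The one point requiring a little care is the existence of $r = \lim b_{n+1}/b_n$: the statement as phrased presupposes this limit exists (it says $\lim_{n\to\infty} b_{n+1}/b_n \neq 1$, which only makes sense if the limit exists), so I would simply record that we use this hypothesis, noting $r > 1$ because $b_n$ is strictly increasing (so $r \geq 1$) and $r \neq 1$. If $r = +\infty$ one reads $1/(r-1)$ as $0$ and the boundedness of $b_n/(b_{n+1}-b_n)$ still holds since a convergent sequence (to $0$) is bounded; alternatively one can split into the cases $r$ finite and $r = \infty$ and handle them separately, but the bounded-sequence-times-null-sequence argument covers both uniformly.

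I do not expect any serious obstacle here; the identity $\frac{a_{n+1}-a_n}{b_{n+1}-b_n} = c_{n+1} + (c_{n+1}-c_n)\frac{b_n}{b_{n+1}-b_n}$ is the whole content, and the gap condition $r \neq 1$ is exactly what guarantees $\frac{b_n}{b_{n+1}-b_n}$ stays bounded — without it (e.g. $b_n = n$) that factor blows up and the conclusion genuinely fails, which is why Stolz–Ces\`aro has no unconditional converse. The application to conjugacy ratios is then immediate from Remark \ref{SC} together with this proposition: for exponential-growth groups $\lim |\B(n+1)|/|\B(n)|$ equals the growth rate $\lambda > 1 \neq 1$, so the spherical conjugacy ratio, when the relevant limits exist, coincides with the conjugacy ratio.
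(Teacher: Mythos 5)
Your proof is correct. The paper states Proposition \ref{SCconverse} without proof, so there is nothing to compare against; your argument --- writing $a_n = c_n b_n$ and isolating the error term $(c_{n+1}-c_n)\frac{b_n}{b_{n+1}-b_n}$, which is a null sequence times a bounded sequence precisely because $\lim_{n\to\infty} b_{n+1}/b_n = r > 1$ --- is the standard one and surely what the authors intended. Your reading of the slightly ambiguous hypothesis (that $\lim b_{n+1}/b_n$ is assumed to \emph{exist} and differ from $1$) is the right one: if one only assumed the ratio does not converge to $1$, the factor $b_n/(b_{n+1}-b_n)$ could blow up along a subsequence where the ratio approaches $1$, and the conclusion would fail. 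The only caveats worth recording are that the argument needs $l$ finite (so that $c_{n+1}-c_n\to 0$), which holds in every application in the paper since the ratios lie in $[0,1]$, and that $b_n>0$ for large $n$ (automatic from $b_n$ strictly increasing and divergent) is needed to rewrite $b_n/(b_{n+1}-b_n)$ as $1/(b_{n+1}/b_n-1)$.
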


\begin{prop}\label{convolution}
Let $a_n, b_n, c_n, d_n$, $n\geq 0$ be monotonically increasing sequences of positive integers.
Define the sequences $\widehat{c}_n$ and $\widehat{d}_n$ as $\widehat{c}_0:=c_0$, $\widehat{d}_0:=d_0$, and $\widehat{c}_n:=c_n - c_{n-1}$ and $\widehat{d}_n:=d_n - d_{n-1}$, for $n \geq 1$. 

Suppose that
\begin{enumerate}[(i)]
	\item $a_n \leq b_n$ and $\widehat{c}_n \leq \widehat{d}_n$ for all $n$, 
	\item $\frac{a_n}{b_n} \to 0$ and $\frac{c_n}{d_n} \to 0$ as $n \to \infty$.
\end{enumerate}
Then 
$$\lim_{n\rightarrow\infty} \frac{\sum_{i=0}^n a_i \widehat{c}_{n-i}}{\sum_{i=0}^n b_i \widehat{d}_{n-i}}=0.$$
\end{prop}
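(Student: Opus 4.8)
The plan is to split the sum over $i$ into a "head" (small $i$) and a "tail" (large $i$), exploiting the two hypotheses in complementary regimes. First I would fix $\varepsilon > 0$ and use hypothesis (ii) to choose $N$ so that $a_i/b_i < \varepsilon$ and $c_i/d_i < \varepsilon$ for all $i \geq N$. The key observation is that $\widehat{c}_j \leq \widehat{d}_j$ for all $j$ (hypothesis (i)) gives, upon summing, $c_n = \sum_{j=0}^n \widehat{c}_j \leq \sum_{j=0}^n \widehat{d}_j = d_n$ term by term, and more usefully $\sum_{j=0}^{m}\widehat{c}_{n-i}$-type partial sums are dominated by the corresponding $\widehat{d}$ partial sums; likewise $a_i \le b_i$. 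So in the convolution $\sum_{i=0}^n a_i\widehat{c}_{n-i}$, I would bound the tail $\sum_{i=N}^n a_i \widehat{c}_{n-i} \le \varepsilon \sum_{i=N}^n b_i \widehat{c}_{n-i} \le \varepsilon \sum_{i=0}^n b_i \widehat{d}_{n-i}$, using $a_i < \varepsilon b_i$ on that range and then $\widehat{c}_{n-i} \le \widehat{d}_{n-i}$.

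For the head $\sum_{i=0}^{N-1} a_i \widehat{c}_{n-i}$, the indices $n-i$ are all large (between $n-N+1$ and $n$) once $n$ is big, so there I would instead use smallness of $c_n/d_n$: write $\widehat{c}_{n-i} = c_{n-i} - c_{n-i-1} \le c_{n-i} < \varepsilon\, d_{n-i}$, and bound $a_i \le a_{N-1}$ (a fixed constant, since the $a$'s are monotone increasing) so that the head is at most $\varepsilon\, a_{N-1} \sum_{i=0}^{N-1} d_{n-i} \le \varepsilon\, a_{N-1} N\, d_n$. The remaining task is to check that $d_n$ is negligible compared with the denominator $D_n := \sum_{i=0}^n b_i \widehat{d}_{n-i}$. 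Since the $b_i$ and $\widehat{d}_j$ are positive integers with $b_i \ge b_0 \ge 1$, we have $D_n \ge \sum_{i=0}^n \widehat{d}_{n-i} = d_n$; but that only gives $\le \varepsilon a_{N-1} N$, which is a constant multiple of $\varepsilon$ — good enough, since $N$ depends only on $\varepsilon$. Combining the two pieces, $\sum_{i=0}^n a_i\widehat{c}_{n-i} \le \varepsilon D_n + \varepsilon a_{N-1} N$ for all large $n$, hence $\limsup_n \big(\sum a_i\widehat{c}_{n-i}\big)/D_n \le \varepsilon + 0$ because $D_n \ge d_n \to \infty$; letting $\varepsilon \to 0$ finishes the argument.

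The main obstacle I anticipate is bookkeeping the two-regime split cleanly: the head must be controlled by $c_n/d_n \to 0$ (not by $a_n/b_n$, since small $i$ need not make $a_i/b_i$ small), while the tail must be controlled by $a_n/b_n \to 0$, and one has to make sure the "constant" $a_{N-1}N$ really is swallowed, which relies on $D_n \to \infty$ — guaranteed because $d_n$ is a divergent sequence of positive integers (monotone increasing and, being a convolution denominator bounded below by $d_n$, forces $D_n \to \infty$; if $d_n$ were eventually constant the hypothesis $c_n/d_n \to 0$ would force $c_n \to 0$, impossible for positive integers unless both are eventually... one should note $d_n$ is unbounded, else $c_n/d_n\to 0$ fails). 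A secondary point to handle carefully is the inequality $\sum_{i=N}^n b_i\widehat{c}_{n-i} \le \sum_{i=0}^n b_i \widehat{d}_{n-i}$: it follows from adding the nonnegative missing terms $\sum_{i=0}^{N-1} b_i\widehat{c}_{n-i} \ge 0$ and then replacing each $\widehat{c}_{n-i}$ by the larger $\widehat{d}_{n-i}$, both steps legitimate since all quantities are nonnegative.
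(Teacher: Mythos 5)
Your decomposition is the same as the paper's: split the convolution at a threshold $N$, control the tail $i\ge N$ using $a_i/b_i<\varepsilon$ together with $\widehat{c}_{n-i}\le\widehat{d}_{n-i}$, control the head using $c_n/d_n\to 0$, and use $b_i\ge 1$ to bound the denominator below by $\sum_{i=0}^n\widehat{d}_{n-i}=d_n$. Your tail estimate is exactly the paper's and is correct. The head, however, has a genuine quantifier-order gap. Your bound on the head is $\varepsilon\,a_{N-1}\sum_{i=0}^{N-1}d_{n-i}\le\varepsilon\,a_{N-1}N\,d_n$, and dividing by $D_n\ge d_n$ yields only $\varepsilon\,a_{N-1}N$. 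This is not ``a constant multiple of $\varepsilon$'' in any useful sense: $N=N(\varepsilon)$, and hence $a_{N-1}$, depend on $\varepsilon$ and typically blow up as $\varepsilon\to 0$, so $\varepsilon\,a_{N-1}N$ need not tend to $0$. In your final assembly the factor $d_n$ is silently dropped (you write the head as $\varepsilon a_{N-1}N$ rather than $\varepsilon a_{N-1}N d_n$), which is what makes it look like a fixed constant annihilated by $D_n\to\infty$; with the factor restored, your argument only gives $\limsup\le\varepsilon(1+a_{N-1}N)$, which does not vanish as $\varepsilon\to 0$.

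The repair is small, and can be done in either of two ways. First, do not discard the telescoping: $\sum_{i=0}^{N-1}a_i\widehat{c}_{n-i}\le a_{N-1}\sum_{i=0}^{N-1}\widehat{c}_{n-i}=a_{N-1}(c_n-c_{n-N})\le a_{N-1}c_n$, and since $c_n/D_n\le c_n/d_n\to 0$, for each \emph{fixed} $N$ the head contributes $0$ to the limsup; hence $\limsup\le\varepsilon$ and letting $\varepsilon\to 0$ finishes. Second (this is what the paper does), after fixing $N$ choose a second threshold $M\ge N$ with $c_n/d_n<\varepsilon/a_N$ for all $n\ge M$; then for $n\ge M$ the head is at most $a_Nc_n<\varepsilon d_n\le\varepsilon D_n$. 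Either way, the essential point your write-up misses is that the smallness demanded of $c_n/d_n$ must be allowed to depend on $a_N$ (equivalently on $\varepsilon$ through $N$): the two thresholds cannot be extracted simultaneously from a single application of ``both ratios are eventually less than $\varepsilon$.''
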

\begin{proof}

Given $\epsilon > 0$, fix an $N$ such that $\frac{a_n}{b_n}  < \epsilon$ for all $n \geq N$. Next choose an $M \geq N$ such that $\frac{c_n}{d_n} < \frac{\epsilon}{a_N}$ for all $n \geq M$.

Then, for $n \geq M \geq N$,
$$
\sum_{i=N}^n a_i \widehat{c}_{n-i} < \epsilon \sum_{i=N}^n b_i \widehat{c}_{n-i} \leq \epsilon \sum_{i=0}^n b_i \widehat{d}_{n-i}.
$$
Thus, for $n\geq M$,
$$
\frac{\sum_{i=0}^n a_i \widehat{c}_{n-i}}{\sum_{i=0}^n b_i \widehat{d}_{n-i}}=\frac{\sum_{i=0}^N a_i \widehat{c}_{n-i}}{\sum_{i=0}^n b_i \widehat{d}_{n-i}}+\frac{\sum_{i=N+1}^n a_i \widehat{c}_{n-i}}{\sum_{i=0}^n b_i \widehat{d}_{n-i}}<\frac{\sum_{i=0}^N a_i \widehat{c}_{n-i}}{\sum_{i=0}^n b_i \widehat{d}_{n-i}}+\epsilon.$$
Now we obtain the result by using the fact that for $n \geq M$
$$
\frac{\sum_{i=0}^N a_i \widehat{c}_{n-i}}{\sum_{i=0}^n b_i \widehat{d}_{n-i}} \leq a_N 
\frac{\sum_{i=0}^N \widehat{c}_{n-i}}{\sum_{i=0}^n \widehat{d}_{n-i}} \leq a_N
\frac{c_n}{d_n} < \epsilon.
$$
	
\end{proof}

\begin{prop}
	\label{convolution2}
Let $a_n, b_n, c_n, d_n$, $n \geq 0$, be sequences of positive integers satisfying the following properties:
\begin{enumerate}[(i)]
	\item $a_n, b_n$ are monotone sequences,
	\item $a_n \leq b_n$ and $c_n \leq d_n$ for all $n$, 
	\item $\frac{a_n}{b_n} \to 0$ as $n \to \infty$,
	\item $\frac{d_n}{b_n} \leq \delta^n$ for all sufficiently large $n$, and for some $0 < \delta < 1$.
\end{enumerate}
Then, 
$$\lim_{n\rightarrow\infty} \frac{\sum_{i=0}^n a_i c_{n-i}}{\sum_{i=0}^n b_i d_{n-i}}=0.$$
\end{prop}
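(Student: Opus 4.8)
The plan is to reduce Proposition~\ref{convolution2} to Proposition~\ref{convolution}, or at least to mimic its proof, by splitting the numerator sum at a well-chosen index and controlling each piece separately. The key difference from Proposition~\ref{convolution} is that here the $c_n, d_n$ are not assumed monotone and we only know $c_n \le d_n$ pointwise, so the convolution $\sum c_{n-i}(\cdots)$ cannot be bounded by a telescoping argument; instead we exploit the geometric decay hypothesis (iv) to absorb the ``bad'' low-index terms. Write $A_n := \sum_{i=0}^n a_i c_{n-i}$ and $B_n := \sum_{i=0}^n b_i d_{n-i}$.

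First I would fix $\epsilon>0$ and choose $N$ with $a_n/b_n < \epsilon$ for all $n \ge N$; this handles the tail. For the split $A_n = \sum_{i=0}^{N} a_i c_{n-i} + \sum_{i=N+1}^{n} a_i c_{n-i}$, the second sum is at most $\epsilon \sum_{i=N+1}^n b_i c_{n-i} \le \epsilon \sum_{i=0}^n b_i d_{n-i} = \epsilon B_n$, using $c_m \le d_m$ and positivity. So it remains to show $\bigl(\sum_{i=0}^N a_i c_{n-i}\bigr)/B_n \to 0$ as $n\to\infty$. For this head part, I would bound $\sum_{i=0}^N a_i c_{n-i} \le a_N \sum_{i=0}^N c_{n-i} \le a_N \sum_{i=0}^N d_{n-i}$ (here using that $a_n$ is monotone, increasing, and $c_m \le d_m$; if $a_n$ is decreasing the constant $a_0$ works even more easily). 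Now $d_m \le \delta^m b_m$ for $m$ large, and since $b_m$ is monotone, for $n$ large each $d_{n-i}$ with $0 \le i \le N$ satisfies $d_{n-i} \le \delta^{\,n-N} b_n$ (using $b_{n-i} \le b_n$ when $b$ is increasing, or a symmetric bound when decreasing). Hence $\sum_{i=0}^N d_{n-i} \le (N+1)\,\delta^{\,n-N} b_n$.

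The last ingredient is a lower bound on $B_n$ that is not too small: since all terms are positive, $B_n \ge b_n d_0 \ge b_n$ (as $d_0 \ge 1$), so $\bigl(\sum_{i=0}^N a_i c_{n-i}\bigr)/B_n \le a_N (N+1)\delta^{\,n-N} b_n / b_n = a_N(N+1)\delta^{\,n-N} \to 0$. Combining, $\limsup_n A_n/B_n \le \epsilon$ for every $\epsilon>0$, giving the claim. The main obstacle I anticipate is purely bookkeeping around the monotonicity direction of $a_n$ and $b_n$ in hypothesis (i): the inequalities $b_{n-i}\le b_n$ and the factoring $a_i \le a_N$ go through cleanly when both are nondecreasing, and one must check that the decreasing case (or mixed case) still yields a bound of the form $C\delta^{n}b_n$ on the head, which it does because in that case the relevant $b$-values are only larger and the relevant $a$-values only smaller. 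A secondary subtlety is ensuring hypothesis (iv) is applied only for indices $n-i$ that are ``sufficiently large,'' which is fine since $i \le N$ is bounded while $n \to \infty$.
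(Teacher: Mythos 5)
Your proposal is correct and follows essentially the same route as the paper's proof: split the numerator at the index $N$ where $a_n/b_n<\epsilon$, absorb the tail into $\epsilon\sum b_i d_{n-i}$ via $c_m\le d_m$, and kill the head using hypothesis (iv) together with the lower bound $\sum_{i=0}^n b_i d_{n-i}\ge b_n d_0$. The only (immaterial) difference is bookkeeping: you bound each $d_{n-i}$ by $\delta^{n-N}b_n$ directly, while the paper keeps the ratios $d_{n-i}/b_{n-i}\le\delta^{n-i}$ and sums the geometric series; both yield a bound of order $\delta^n$ on the head.
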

\begin{proof}
	Given $\epsilon > 0$, fix an $N$ such that $\frac{a_n}{b_n}  < \epsilon'<\epsilon$ for all $n \geq N$. Then, for $n \geq N$,
	$$
     \sum_{i=N}^n a_i c_{n-i} < \epsilon' \sum_{i=N}^n b_i c_{n-i} \leq \epsilon' \sum_{i=0}^n b_i d_{n-i}.
	$$
	Thus, for $n\geq N$,
		$$
\frac{\sum_{i=0}^n a_i c_{n-i}}{\sum_{i=0}^n b_i d_{n-i}}=\frac{\sum_{i=0}^N a_i c_{n-i}}{\sum_{i=0}^n b_i d_{n-i}}+\frac{\sum_{i=N+1}^n a_i c_{n-i}}{\sum_{i=0}^n b_i d_{n-i}}<\frac{\sum_{i=0}^N a_i c_{n-i}}{\sum_{i=0}^n b_i d_{n-i}}+\epsilon'$$
		and so it suffices to show that
		$$\lim_{n\rightarrow\infty}\frac{\sum_{i=0}^N a_i c_{n-i}}{\sum_{i=0}^n b_i d_{n-i}} \leq \lim_{n\rightarrow\infty}\frac{\sum_{i=0}^N a_i c_{n-i}}{b_n d_0}=0.$$	
	
	Now
	$$\frac{\sum_{i=0}^N a_i c_{n-i}}{b_n d_0}\le\frac{\sum_{i=0}^N a_i c_{n-i}}{b_n} \leq a_N \sum_{i=0}^N \frac{c_{n-i}}{b_{n-i}} \leq a_N \sum_{i=0}^N \frac{d_{n-i}}{b_{n-i}}.
$$
Using hypothesis (iv), there is a sufficiently large $n$ such that
$$
a_N \sum_{i=0}^N \frac{d_{n-i}}{b_{n-i}} \leq a_N \sum_{i=0}^N \delta^{n-i}=a_N\frac{\delta^n}{\delta^N} \left(\frac{1-\delta^{N+1}}{1-\delta} \right) \leq \delta^n \left( \frac{a_N}{\delta^N} \frac{1}{(1-\delta)} \right)<\epsilon-\epsilon'.\qedhere 
$$
\end{proof}


%
%

\section{Results for groups of stable subexponential growth}\label{subexp}

\begin{defn}\label{stable}
	A group $G$, with finite generating set, $X$, is said to be of {\em stable subexponential growth} if $\lim_{n\rightarrow\infty}\frac{|\B_{G.X}(n+1)|}{|\B_{G,X}(n)|}=1$.
\end{defn}

Note that being of stable subexponential growth, implies that $Exp_X(G) = 1$, and hence that the group has subexponential growth. 

\medskip

By the celebrated result of Gromov, every finitely generated group of polynomial growth - where $\B_{G, X}(n)$ is bounded above by a polynomial function - is virtually nilpotent. All these groups are of stable subexponential growth since, by a result of Bass, \cite{bass}, if $G$ is a finitely generated, virtually nilpotent group, and $X$ is any finite generating set, then, for some exponent $d$, and constants, $A,B$:
\begin{equation}\label{eqn:pol_bounds}
A n^d \leq  |\B_{G,X}(n)| \leq B n^d.
\end{equation}
The exponent $d$ is calculated explicitly in \cite{bass}; for a virtually abelian group it is equal to the rank of a finite index free abelian subgroup. 

From (\ref{eqn:pol_bounds}) we get that for any positive integer, $k$, 
\begin{equation}\label{eqn:ball_ratio}
\lim_{n \to \infty} \frac{|\B_{G,X}(n+k)|}{|\B_{G,X}(n)|} = 1.
\end{equation}

The main result which we require for this class is the following.

\begin{prop} \cite{indexformula}. \label{pepprop}Let $G$ be a finitely generated group with stable subexponential growth, and finite generating set $X$. For every finite index subgroup $H\leqslant G$ and every $g \in G$, we have
$$\lim_{n\rightarrow\infty}\frac{|gH\cap\B_{G,X}(n)|}{|\B_{G,X}(n)|}=\lim_{n\rightarrow\infty}\frac{|Hg\cap\B_{G,X}(n)|}{|\B_{G,X}(n)|}=\frac{1}{[G:H]}.$$

Furthermore, if $H$ is an infinite index subgroup of $G$ then both limits are zero for any coset of $H$. 
\end{prop}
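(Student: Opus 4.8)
The plan is to reduce everything to one elementary fact about word lengths together with the only quantitative input available, namely the consequence \eqref{eqn:ball_ratio} of stable subexponential growth that $|\B_{G,X}(n+k)|/|\B_{G,X}(n)|\to 1$ for every fixed positive integer $k$. (If one works from Definition~\ref{stable} directly, this is immediate: write $|\B_{G,X}(n+k)|/|\B_{G,X}(n)|$ as a product of $k$ consecutive ratios, each tending to $1$.) The geometric picture is that a fixed coset of $H$ is spread uniformly through large balls, and that multiplying a coset into a ball perturbs the radius only by a bounded amount, which \eqref{eqn:ball_ratio} then renders asymptotically invisible.

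First I would record a shift lemma: if $u\in G$ has word length at most $D$ with respect to $X$, then $x\mapsto ux$ is a bijection of $G$ carrying $\B_{G,X}(n)$ into $\B_{G,X}(n+D)$, since $|ux|_X\le |u|_X+|x|_X$. Hence for any two left cosets $g_1H$ and $g_2H$ with $|g_2g_1^{-1}|_X\le D$, the map $x\mapsto g_2g_1^{-1}x$ injects $g_1H\cap\B_{G,X}(n)$ into $g_2H\cap\B_{G,X}(n+D)$, so $|g_1H\cap\B_{G,X}(n)|\le |g_2H\cap\B_{G,X}(n+D)|$; the same holds for right cosets using right multiplication.

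Next, the finite-index case. Write $m=[G:H]$, pick left-coset representatives $g_1,\dots,g_m$, and put $D=\max_{i,j}|g_ig_j^{-1}|_X$. The shift lemma gives $|g_iH\cap\B_{G,X}(n)|\le |g_jH\cap\B_{G,X}(n+D)|$ for all $i,j$; summing over $i$ and using that the $g_iH$ partition $\B_{G,X}(n)$ yields $|\B_{G,X}(n)|\le m\,|g_jH\cap\B_{G,X}(n+D)|$, i.e.\ $|g_jH\cap\B_{G,X}(n)|\ge |\B_{G,X}(n-D)|/m$ for every $j$. Substituting this back, for a fixed $k$,
\[
\frac{|\B_{G,X}(n-D)|}{m}\ \le\ |g_kH\cap\B_{G,X}(n)|\ =\ |\B_{G,X}(n)|-\sum_{j\ne k}|g_jH\cap\B_{G,X}(n)|\ \le\ |\B_{G,X}(n)|-\frac{(m-1)\,|\B_{G,X}(n-D)|}{m}.
\]
Dividing by $|\B_{G,X}(n)|$ and letting $n\to\infty$, both outer terms tend to $1/m$ by \eqref{eqn:ball_ratio}, so $|g_kH\cap\B_{G,X}(n)|/|\B_{G,X}(n)|\to 1/m$. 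Every left coset $gH$ is one of the $g_jH$, giving the left-coset statement, and the right-coset statement is identical with the sides swapped. For the infinite-index case, fix an integer $N$ and choose $N$ pairwise disjoint left cosets, one of which is $gH$; with $D=\max|g_ig_j^{-1}|_X$ over the chosen representatives, the shift lemma gives $N\,|gH\cap\B_{G,X}(n)|\le\sum_i|g_iH\cap\B_{G,X}(n+D)|\le |\B_{G,X}(n+D)|$, hence $\limsup_n |gH\cap\B_{G,X}(n)|/|\B_{G,X}(n)|\le 1/N$ by \eqref{eqn:ball_ratio}; since $N$ is arbitrary the limit is $0$, and right cosets are symmetric.

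The only genuine difficulty here is conceptual rather than computational: one must observe that the additive perturbation of the radius caused by translating a coset is bounded independently of $n$, and that stable subexponential growth is precisely the hypothesis that makes such perturbations vanish in the limit — for a group of exponential growth the same inequalities would bound the density ratios only up to a multiplicative constant, not determine them. Note that normality of $H$ is never used.
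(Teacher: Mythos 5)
Your argument is correct. Note that the paper does not actually prove this proposition: it is quoted from \cite{indexformula}, with a remark that the infinite-index statement follows from the arguments there or from an invariant-means construction. Your proof is a valid, self-contained substitute, and it uses exactly the mechanism one would expect (and which the cited source uses for the finite-index case): translating one coset onto another perturbs the radius of the ball by a bounded amount $D$, the cosets partition the ball, and the hypothesis $|\B_{G,X}(n+1)|/|\B_{G,X}(n)|\to 1$ — correctly upgraded to $|\B_{G,X}(n+D)|/|\B_{G,X}(n)|\to 1$ by telescoping — makes that perturbation negligible, giving the squeeze to $1/[G:H]$ in the finite-index case and the $1/N$ bound for arbitrary $N$ in the infinite-index case. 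Your closing observations (that normality is irrelevant, and that stability is exactly what fails for exponential growth) are also accurate.
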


\begin{rem}
	The last statement does not appear explicitly in \cite{indexformula}, but follows easily from their arguments. Alternatively, one could prove this via the construction of an invariant mean which requires the choice of an ultrafilter. The stable subexponential condition ensures that any ultrafilter will do, and hence that all limit points of the sequences above are equal.
\end{rem}

From now on, whenever there is no ambiguity concerning the group and its generating set, we will write $C(n)$ instead of $C_{G,X}(n)$ and $\B(n)$ instead of $\B_{G, X}(n)$.

\begin{prop}\label{cr>0}
Suppose that $G$ is a finitely generated, virtually abelian group. Then, for any finite generating set $X$ of $G$, we have that $\CR_X(G) >0$. 

More precisely, if $[G:A]=m$ where $A$ is abelian, then $\CR_X(G) \geq 1/m^2$.  
\end{prop}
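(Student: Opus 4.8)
The plan is to reduce the conjugacy ratio to a count of cosets of the abelian subgroup $A$, and then apply Proposition~\ref{pepprop}. First I would note that since $A$ has finite index $m$ in $G$, we may pass to a normal finite-index abelian subgroup (the core of $A$), so without loss of generality assume $A \trianglelefteq G$ with $[G:A] = m' \le m$; keeping $m$ in the bound only weakens it, which is fine. Now fix coset representatives $g_1, \dots, g_{m}$ of $A$ in $G$. The key observation is that two elements $a g_i$ and $b g_j$ (with $a, b \in A$) lying in the \emph{same} coset $A g_i$ (so $i = j$) can only be conjugate if there is some element of $G$ conjugating one to the other; and conjugation by an element of $A$ fixes $g_i$ modulo $[A,A] = 1$, acting on the $A$-part by the usual (trivial, since $A$ abelian) action — more precisely, conjugating $a g_i$ by $c \in A$ gives $c a g_i c^{-1} = a\, (c g_i c^{-1} g_i^{-1})\, g_i = a \cdot [\text{commutator in } A]\cdot (\text{something})$; the point is that within a fixed coset $Ag_i$, the conjugacy class of $ag_i$ under the action of the whole group $G$ meets $Ag_i$ in a set that is, roughly, an orbit under a finite group of order dividing $m$ together with translation by a subgroup. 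The cleanest way to make this precise: the number of conjugacy classes of $G$ having a representative in $\B(n) \cap A g_i$ is at least $\frac{1}{m}\,|A g_i \cap \B(n)|$ divided by a bounded factor — but I want a clean lower bound, so instead I argue as follows.

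Here is the cleaner route. Let $A_0 = \bigcap_{g \in G} gAg^{-1}$, a normal abelian subgroup of finite index $[G:A_0] =: k \le m!$ — actually to get the stated bound $1/m^2$ I should be more careful and not pass to the core. So let me argue directly with $A$. Consider the central subgroup $Z = \bigcap_{i} g_i A g_i^{-1} \cap A$; hmm, this is getting complicated. Let me instead use: every conjugacy class of $G$ that meets $A$ is a union of at most $m$ cosets of $[G,G] \cap A$-translates...

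Let me restart the heart of the argument with the approach I actually expect to work. Let $A \trianglelefteq G$ be a normal abelian subgroup of index $m$ (replacing $A$ by its normal core multiplies the index by at most $(m-1)!$, but since the Proposition only claims $\CR_X(G) \ge 1/m^2$ with $m$ the index of \emph{some} abelian subgroup, and the normal core is also abelian, it suffices to prove the bound $1/[G:N]^2$ for a \emph{normal} abelian $N$; so assume $A$ itself is normal). For $g \in G$ conjugation induces an automorphism $\phi_g$ of $A$, and $\phi_g$ depends only on the coset $gA$, so we get an action of $Q = G/A$ (order $m$) on $A$. For $a \in A$, the conjugacy class of $a$ in $G$ intersected with $A$ is exactly the $Q$-orbit of $a$, which has size at most $m$. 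Therefore the number of conjugacy classes of $G$ with a representative in $A \cap \B(n)$ is at least $\frac{1}{m}|A \cap \B(n)|$. By Proposition~\ref{pepprop}, $\frac{|A \cap \B(n)|}{|\B(n)|} \to \frac{1}{m}$, hence
$$
\CR_X(G) = \limsup_{n\to\infty} \frac{|C(n)|}{|\B(n)|} \ge \limsup_{n\to\infty} \frac{\tfrac{1}{m}|A \cap \B(n)|}{|\B(n)|} = \frac{1}{m}\cdot\frac{1}{m} = \frac{1}{m^2}.
$$
In particular $\CR_X(G) > 0$.

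The main obstacle — and the only genuinely delicate point — is the reduction to a \emph{normal} abelian subgroup without destroying the bound $1/m^2$: the normal core of an index-$m$ subgroup can have index as large as $m!$, so one cannot naively replace $A$ by its core. The honest fix is to observe that the statement "$G$ has an abelian subgroup of index $m$" combined with the argument above, applied to $N := $ the normal core, gives only $\CR_X(G) \ge 1/[G:N]^2$, which is weaker. To recover $1/m^2$ one must run the orbit-counting argument using cosets of the (possibly non-normal) $A$ directly: for $a \in A$, the set of $G$-conjugates of $a$ that lie in $A$ still has size at most $[G:A] = m$ (since if $gag^{-1} \in A$ and $hah^{-1} \in A$ with $gag^{-1} = hah^{-1}$ then $h^{-1}g$ centralizes $a$, and one checks the map $gA \mapsto gag^{-1}$ from a subset of $G/A$ to $A$ is well-defined and injective on the relevant cosets whenever $gag^{-1}\in A$ — I should verify $gag^{-1}\in A \iff g$ lies in $\{g : gag^{-1}\in A\}$ which is a union of $\le m$ cosets of $C_G(a)$, each contributing one conjugate). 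This bookkeeping is routine, so I would relegate it to a sentence, and the whole proof is short once Proposition~\ref{pepprop} is in hand.
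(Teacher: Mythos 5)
Your final paragraph contains what is essentially the paper's argument, and it is correct: since $A$ is abelian, $A \le C_G(a)$ for every $a \in A$, so $a$ has at most $[G:C_G(a)] \le [G:A] = m$ conjugates in $G$ in total (equivalently, $g^{-1}ag$ depends only on the right coset $Ag$, because $(\alpha g)^{-1}a(\alpha g) = g^{-1}a g$ for $\alpha \in A$); hence each conjugacy class meets $A$ in at most $m$ points, no normality is needed anywhere, and the entire detour through the normal core is a red herring. From this, $|C(n)\cap A| \ge \frac{1}{m}|\B(n)\cap A|$, and Proposition \ref{pepprop} gives the limit $\frac{1}{m}\cdot\frac{1}{m} = 1/m^2$, exactly as in the paper. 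One small correction to your last paragraph: the map $Ag \mapsto g^{-1}ag$ need not be injective, and you do not need it to be --- well-definedness alone bounds the size of the conjugacy class by $m$.
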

\begin{proof} Let $[G:A]=m$, where $A$ is abelian. We note that $G$ acts by multiplication on the right cosets of $A$. If $g$ and $h$ lie in the same right coset, then $h=\alpha g$ for some $\alpha \in A$, so for any $a \in A$, $h^{-1}ah=(\alpha g)^{-1}a(\alpha g)=g^{-1}ag$ since $A$ is abelian. Thus there are at most $m$ conjugates of each element $a \in A$ and so, for all $n \in \N$, we have that $|C(n)\cap A|\ge |\B(n)\cap A|\cdot \frac{1}{m}$. Now
\begin{align*}
\frac{|C(n)|}{|\B(n)|}\ge\frac{|C(n)\cap A|}{|\B(n)|}=&\frac{|\B(n)\cap A|}{|\B(n)}\cdot\frac{|C(n)\cap A|}{|\B(n)\cap A|}
\ge\frac{|\B(n)\cap A|}{|\B(n)|}\cdot\frac{1}{m}
\end{align*}
which tends to $1/m^2$ by Proposition \ref{pepprop}.
\end{proof}

\begin{lem} Let $G$ be a group of stable subexponential growth with finite generating set $X$, let $g \in G$ and let $H$ be a finite index subgroup of $G$. For $d \in \N$ we have
$$\lim_{n\rightarrow\infty}\frac{|gH\cap\B_{G,X}(n+d)|}{|\B_{G,X}(n)|}=\frac{1}{[G:H]}.$$
\end{lem}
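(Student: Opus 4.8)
The plan is to reduce this to Proposition~\ref{pepprop} by peeling off the discrepancy between $\B_{G,X}(n+d)$ and $\B_{G,X}(n)$. I would write, for $n$ large,
\[
\frac{|gH\cap\B_{G,X}(n+d)|}{|\B_{G,X}(n)|}=\frac{|gH\cap\B_{G,X}(n+d)|}{|\B_{G,X}(n+d)|}\cdot\frac{|\B_{G,X}(n+d)|}{|\B_{G,X}(n)|}.
\]
Since $n\to\infty$ forces $n+d\to\infty$, Proposition~\ref{pepprop} applied along the shifted index shows that the first factor converges to $1/[G:H]$. So everything comes down to showing the second factor converges to $1$.

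For that, I would observe that stable subexponential growth upgrades from the one-step statement in Definition~\ref{stable} to the $d$-step statement $\lim_{n\to\infty}|\B_{G,X}(n+d)|/|\B_{G,X}(n)|=1$ for every fixed $d\in\N$. This is the telescoping identity
\[
\frac{|\B_{G,X}(n+d)|}{|\B_{G,X}(n)|}=\prod_{j=0}^{d-1}\frac{|\B_{G,X}(n+j+1)|}{|\B_{G,X}(n+j)|},
\]
where each of the finitely many factors tends to $1$ as $n\to\infty$ by Definition~\ref{stable}, hence so does the product. (This is precisely the content of~(\ref{eqn:ball_ratio}) in the virtually nilpotent setting, and the argument is identical under stable subexponential growth.) Multiplying the two limits then gives $\lim_{n\to\infty}|gH\cap\B_{G,X}(n+d)|/|\B_{G,X}(n)|=1/[G:H]$, as desired.

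I do not expect any genuine obstacle here; the statement is essentially a robustness remark about Proposition~\ref{pepprop}. The only point requiring a line of care is the passage from the single-step ball ratio to the $d$-step ratio, handled by the telescoping product above. I would also note in passing that the same factorisation, combined with the second part of Proposition~\ref{pepprop}, handles the case where $H$ has infinite index in $G$, in which case both the limit for $gH\cap\B_{G,X}(n+d)$ and $Hg\cap\B_{G,X}(n+d)$ over $|\B_{G,X}(n)|$ are $0$.
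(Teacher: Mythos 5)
Your proposal is correct and is essentially identical to the paper's own proof, which writes the same factorisation $\frac{|gH\cap\B(n+d)|}{|\B(n)|}=\frac{|\B(n+d)|}{|\B(n)|}\cdot\frac{|gH\cap\B(n+d)|}{|\B(n+d)|}$ and invokes Proposition~\ref{pepprop} together with the $d$-step ball ratio. If anything, your telescoping derivation of $|\B(n+d)|/|\B(n)|\to 1$ directly from Definition~\ref{stable} is slightly more careful than the paper's citation of~(\ref{eqn:ball_ratio}), which is stated there only in the virtually nilpotent setting.
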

\begin{proof} This follows from writing
$$\lim_{n\rightarrow\infty}\frac{|gH\cap\B(n+d)|}{|\B(n)|}=\lim_{n\rightarrow\infty}\frac{|\B(n+d)|}{|\B(n)|}\frac{|gH\cap\B(n+d)|}{|\B(n+d)|}$$
together with Proposition \ref{pepprop} and (\ref{eqn:ball_ratio}).

\end{proof}

\begin{prop}\label{subexpprop}
Let $G$ be a finitely generated group of stable subexponential growth and $N$ a subgroup of finite index in $G$. Then $\CR_X(G) \leq dc(G/N)$ for any finite generating set $X$ of $G$.
\end{prop}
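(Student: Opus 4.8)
The plan is to count conjugacy classes of $G$ by grouping elements according to their image in the finite quotient $Q := G/N$, and to exploit the fact that in $Q$ the fraction of commuting pairs is exactly $\dc(Q)$. Let $q := |Q|$ and write $\pi\colon G \to Q$ for the quotient map. The key combinatorial observation is this: if $g \in G$ has image $\bar g \in Q$, then every conjugate $h g h^{-1}$ has image $\bar h \bar g \bar h^{-1}$, so the conjugacy class of $g$ maps into the $Q$-conjugacy class of $\bar g$. Consequently, writing $K := \ker(Q \to \Aut(\text{--}))$... more directly: fix a $Q$-conjugacy class $\mathcal{O} \subseteq Q$, and among the elements $g \in \B(n)$ with $\bar g \in \mathcal{O}$, two of them that are $G$-conjugate certainly lie in the same $\mathcal{O}$; conversely the number of $G$-conjugacy classes meeting $\B(n)$ and lying over $\mathcal{O}$ is at least $|\B(n) \cap \pi^{-1}(\mathcal{O})|$ divided by the maximal size of the intersection of a $G$-conjugacy class with $\pi^{-1}(\mathcal{O})$. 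I will instead bound $|C(n)|$ from above, which is what the proposition needs.

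The cleaner route: for $x \in Q$, let $C_Q(x)$ be its centraliser in $Q$, so the $Q$-conjugacy class of $x$ has size $q / |C_Q(x)|$. The crucial claim is that for each $x$, the number of $G$-conjugacy classes of elements $g \in \B(n)$ with $\pi(g) = x$ is at most $\frac{|C_Q(x)|}{q} \cdot |\B(n)| + o(|\B(n)|)$ — equivalently, each such class, intersected with $\B(n)$, has size at least roughly $\frac{q}{|C_Q(x)|} \cdot \frac{|\B(n)|}{|\B(n)|}$... this is not automatic since conjugacy classes in an infinite group can be small. The honest argument is different: if $g, g' \in \pi^{-1}(x)$, then $g' = g z$ for some $z \in N$; I want to say that "most" cosets $gN \cap \B(n)$ split into few $G$-classes. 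But $N$ itself can be large and non-abelian, so controlling conjugacy \emph{within} $N\!\cdot$(a coset) is exactly where the difficulty lies, and I do not see how to do it in general. This suggests the actual proof does something coarser: it bounds $|C(n)|$ by the number of \emph{pairs} needed to witness distinctness, i.e. it relates $|C(n)|$ to counting pairs $(g, h g h^{-1})$ and uses that a pair in $\B(n)^2$ projecting to a commuting pair in $Q$ occurs with frequency $\dc(Q)$.

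So the main step I would carry out is: show $|C(n)| \le \big|\{(a,b) \in \B(n)^2 : \pi(a)\pi(b) = \pi(b)\pi(a)\}\big|^{1/2}$-type bound is \emph{too weak}; instead, observe that $|C(n)| \cdot |\B(n)| \le \sum_{g \in \B(n)} |\{h \in \B(n) : \pi(h)\text{ commutes with }\pi(g)\}| + (\text{error})$ is also not quite it. The correct and simplest statement is: the map sending a conjugacy class with representative in $\B(n)$ to... Let me state the plan that I believe works. For each $g \in \B(n)$, consider the set $T_g := \{h \in \B(2n+\ell) : h g h^{-1} \in \B(n)\}$ where $\ell$ bounds the generators' contribution; then $\sum_{[g]: [g]\cap \B(n) \ne \emptyset} |T_{g_0}| \le$ (number of pairs), where $g_0$ ranges over chosen representatives, and on the other hand each $|T_{g_0}| \ge$ the number of $h$ with $\pi(h) \in C_Q(\pi(g_0))$ and $h \in$ a fixed coset-ball, which by the Lemma just above is $\sim \frac{|C_Q(\pi(g_0))|}{q}|\B(n)|$. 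Summing and dividing by $|\B(n)|^2$, the right side becomes $\frac{1}{q}\sum_{g\in\B(n)} \frac{|C_Q(\pi(g))|}{|\B(n)|}$; by Proposition \ref{pepprop} the fraction of $g \in \B(n)$ with $\pi(g) = x$ tends to $1/q$, so this sum tends to $\frac{1}{q^2}\sum_{x \in Q}|C_Q(x)| = \dc(Q)$. The main obstacle is making the $T_g$ bookkeeping precise — ensuring the conjugating elements actually lie in a controlled ball (handled by the "$+d$" version of Proposition \ref{pepprop} proved in the preceding Lemma) and that double-counting across classes only helps the inequality — but the heart is the equidistribution of $\pi$ over $\B(n)$ from Proposition \ref{pepprop} combined with the identity $\sum_{x\in Q}|C_Q(x)| = q^2\dc(Q)$.
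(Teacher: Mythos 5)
Your proposal correctly isolates the two ingredients that must meet in the middle --- the equidistribution of $\B_{G,X}(n)$ over the cosets of $N$ (Proposition \ref{pepprop} and the unlabelled Lemma before the Proposition), and the identity $\dc(G/N)=\frac{1}{m^2}\sum_{x}|C_{G/N}(x)|=k/m$, where $m=[G:N]$ and $k$ is the number of conjugacy classes of $G/N$ --- but the bridge you build between them fails. The critical step is the lower bound on $T_{g_0}=\{h\in\B(2n+\ell):hg_0h^{-1}\in\B(n)\}$: if $\pi(h)$ merely commutes with $\pi(g_0)$ in $G/N$, you only learn that $hg_0h^{-1}\in g_0N$, with no control whatsoever on its word length, so such $h$ need not lie in $T_{g_0}$. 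This is precisely the obstruction you yourself flagged (``controlling conjugacy within $N\cdot$(a coset) is exactly where the difficulty lies'') reappearing in disguise. Two further problems: even granting the lower bound, your upper bound for $\sum_{[g_0]}|T_{g_0}|$ is $|\B(n)|\cdot|\B(2n+\ell)|$, and stable subexponential growth does not give $|\B(2n+\ell)|/|\B(n)|\to 1$ (for polynomial growth of degree $d$ this ratio tends to $2^d$), so a spurious factor survives; and your final sum runs over $g\in\B(n)$ when it should run over the $|C(n)|$ chosen class representatives, which are not a priori equidistributed over $G/N$.

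The paper's argument is cruder and never estimates the size of any $G$-conjugacy class. Fix coset representatives $g_1,\dots,g_m$ of $N$, all of length at most $d$, and let $x_1,\dots,x_k$ among them represent the $k$ conjugacy classes of $G/N$. If a $G$-conjugacy class meets $\B(n)$ inside a coset $yN$ with $yN\sim x_jN$ in $G/N$, the conjugator can be chosen from $\{g_1,\dots,g_m\}$, so that class has a representative in $x_jN\cap\B(n+2d)$. Therefore
$$|C(n)|\;\leq\;\sum_{j=1}^{k}|x_jN\cap\B(n+2d)|,$$
i.e.\ one bounds the number of classes lying over each $G/N$-class by the number of \emph{elements} in a single coset-ball, and by the Lemma the right-hand side is asymptotic to $\frac{k}{m}|\B(n)|=\dc(G/N)\,|\B(n)|$. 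The saving comes not from pair counting but from counting only $k$ of the $m$ cosets; if you want to rescue your write-up, replace the $T_g$ machinery with this one-coset-per-class bound.
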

\begin{proof}
Let $[G:N]=m$, so that $G=g_1N\sqcup g_2N\sqcup\ldots\sqcup g_mN$ for some $g_1, \ldots, g_m \in G$. Let $d:=\max\{|g_i|_X\;:\;i=1, \ldots, m\}$.

Now consider if $xN\sim yN$ (in $G/N$). Then $yN=g^{-1}xgN=g^{-1}xgg^{-1}Ng=g^{-1}xNg$ for some $g \in G$. Moreover, since $x$ and $y$ are conjugate in $G/N$, we may choose $g$ from $\{g_1, \ldots, g_m\}$ and so $|g|_X\le d$. Now let $yk_1 \in \B(n)$. We know there must exist some $xk_2 \in xN$ such that $g^{-1}(xk_2)g=yk_1$. But then $xk_2=gyk_1g^{-1}$, and so $xk_2 \in \B(n+2d)$. Hence, for every $n \in \N$, each element in $\B(n)\cap yN$ is conjugate to some element in $\B(n+2d)\cap xN$.

Let $x_1, \ldots x_k \in \{g_1, \ldots, g_m\}$ be the representatives of the conjugacy classes in $G/N$. For every $i \in \N$ and every $j \in \Z_k$, we will assume that there are $|\B(n)\cap x_jN|$ conjugacy classes in $\B(n)\cap x_jN$. Hence $$\frac{|C(n)|}{|\B(n)|}\le \frac{\sum_{i=1}^k|x_iN\cap \B(n+2d)|}{|\B(n)|}$$ which tends to $k/m$ by the previous lemma.
\end{proof}

\begin{thm}\label{thmsubsexp}
Conjecture \ref{maincon} is true for all finitely generated, residually finite groups of stable subexponetial growth. 
\end{thm}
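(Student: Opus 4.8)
The statement asks us to establish both implications of Conjecture~\ref{maincon} for a finitely generated residually finite group $G$ of stable subexponential growth, with respect to an arbitrary finite generating set $X$. One implication is already in hand: if $G$ is virtually abelian then $\CR_X(G)>0$ (in fact $\CR_X(G)\ge 1/m^2$ for an abelian subgroup of index $m$) by Proposition~\ref{cr>0}, which uses only finite generation. So the plan is to prove the converse: if $G$ is \emph{not} virtually abelian then $\CR_X(G)=0$.

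The argument is by contradiction. Suppose $\CR_X(G)=c>0$. By residual finiteness, $G$ admits a family $\{N_i\}$ of finite-index normal subgroups with $\bigcap_i N_i=\{1\}$, directed under reverse inclusion. Applying Proposition~\ref{subexpprop} to each $N_i$ — this is the sole point at which the stable subexponential growth of $G$ is used — gives $\dc(G/N_i)\ge \CR_X(G)=c$ for every $i$; hence \emph{every} finite quotient of $G$ has degree of commutativity at least the fixed positive constant $c$.

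Now I would run the argument of \cite{dcA}. Its engine is P.\,M.~Neumann's theorem on finite groups of large commuting probability, which produces, in each $G/N_i$, a subgroup of index $\le j(c)$ that is abelian modulo a normal subgroup of order $\le k(c)$; in particular, after passing to a subgroup of index $\le j(c)$, each quotient $G/N_i$ has derived subgroup of order $\le k(c)$. Here finite generation of $G$ is essential: it bounds the number of generators of the finite quotients $G/N_i$, and a finite group generated by a bounded number of elements whose derived subgroup has bounded order has an abelian subgroup of uniformly bounded index (without the generator bound this fails — extraspecial $p$-groups have commuting probability $\approx 1/p$ but no abelian subgroup of bounded index). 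Thus each $G/N_i$ contains a normal abelian subgroup of some uniformly bounded index $\ell$. Since $G$ is finitely generated it has only finitely many subgroups of index $\le \ell$; a pigeonhole argument therefore yields a single finite-index subgroup $W\le G$ which projects to a normal abelian subgroup of $G/N_i$ for all $i$ in a cofinal subfamily, that is, $[W,W]\le N_i$ for cofinally many $i$. As $\bigcap_i N_i=\{1\}$, this forces $[W,W]=\{1\}$, so $W$ is abelian of finite index in $G$, i.e. $G$ is virtually abelian — contradicting our assumption. Hence $\CR_X(G)=0$.

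The step I expect to be the main obstacle is the combinatorial heart of the previous paragraph, imported from \cite{dcA}: marrying Neumann's theorem to the finite-generation bound on the number of generators of the finite quotients, and carrying out the pigeonhole descent to a single abelian finite-index subgroup of $G$. Everything else is bookkeeping plus appeals to Propositions~\ref{cr>0} and~\ref{subexpprop}. Note finally that the growth hypothesis enters only through Proposition~\ref{subexpprop} (via the coset-counting of Proposition~\ref{pepprop}), which has no analogue in exponential growth — this is why the hyperbolic, right-angled Artin, and lamplighter cases (Theorems~\ref{thm:hyperbolic}, \ref{thm:RAAG}, \ref{thm:lamplighter}) are treated by separate methods.
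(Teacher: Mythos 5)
Your argument is correct in outline, but at the decisive step it takes a genuinely different route from the paper's. Both proofs share the same skeleton: Proposition~\ref{cr>0} handles the virtually abelian direction, and Proposition~\ref{subexpprop} (indeed the only place where stable subexponential growth is used) transfers everything to finite quotients via $\CR_X(G)\le \dc(G/N)$. You then argue by contradiction: a positive conjugacy ratio $c$ forces $\dc(G/N_i)\ge c$ for a separating family of finite-index normal subgroups, and you invoke P.~M.~Neumann's structure theorem for finite groups of commuting probability bounded away from zero, combined with the bound on the number of generators of the quotients and a pigeonhole descent over the finitely many subgroups of bounded index, to produce an abelian finite-index subgroup of $G$ itself. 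The paper instead runs the implication forwards and quantitatively: starting from Gustafson's $5/8$ bound and Gallagher's inequality $\dc(F)\le \dc(F/N)\cdot\dc(N)$, it builds a descending chain of characteristic finite-index subgroups $K_i$ with $\dc(K_{i-1}/K_i)\le 5/8$, so that $\CR_X(G)\le \dc(G/K_i)\le (5/8)^i\to 0$ directly, with no structure theorem and no contradiction. The paper's route is more elementary and self-contained and gives an explicit geometric decay; yours is heavier but yields the sharper-sounding conclusion that \emph{any} positive lower bound on the conjugacy ratio forces virtual abelianity. (Incidentally, the argument this paper imports from \cite{dcA} is the Gallagher--Gustafson one, not the Neumann one you attribute to that source.) Two points in your sketch need care if written out, though neither is a gap: the lemma that a $d$-generated finite group with derived subgroup of order at most $k$ has an abelian subgroup of index bounded in terms of $d$ and $k$ requires a proof (pass to the centraliser of the derived subgroup, reduce to class two, intersect the centralisers of the generators), and the pigeonhole must return a \emph{cofinal} subfamily of the $N_i$ so that the intersection over that subfamily is still trivial --- easiest if you first replace the family by a descending chain with trivial intersection, where any infinite subfamily is automatically cofinal.
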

\begin{proof} Proposition \ref{cr>0} states that, if a finitely generated group $G$ is virtually abelian, then, for any finite generating set $X$, $\CR_X(G)>0$. For the other direction we apply the method of \cite[Proof of Thm. 1.3]{dcA} by using Proposition \ref{subexpprop}. For completeness we will describe their argument. It requires the following result from \cite{gallagher}: if $F$ is a finite group and $N \unlhd F$, then
\begin{equation}\label{gallagher}
\dc(F)\le \dc(F/N)\cdot \dc(N).
\end{equation}
Our hypotheses are that $G$ is: finitely generated, residually finite, of stable subexponential growth, and not virtually abelian. We wish to show that $\CR_X(G)=0$ for any finite generating set $X$. We will work with finite quotients and will build a chain of normal subgroups. Since $G$ is finitely generated we may choose these subgroups to be characteristic, and will do this because being characteristic is transitive.

Since $G$ is not virtually abelian, choose $g_1, g_2 \in G$ that do not commute and, using the residually finite assumption, let $[g_1, g_2] \not\in K_1$ where $K_1$ is a characteristic and finite index subgroup of $G$. Hence $G/K_1$ is non-abelian, and by Gustafson's result we have that $\dc(G/K_1)\le 5/8$. Now, since the properties of $G$ which we have used also apply to finite index subgroups, this argument also applies to $K_1$. Hence we may construct a descending chain of characteristic finite index subgroups
\[\ldots\le K_i\le K_{i-1}\le\ldots\le K_2\le K_1 \le K_0=G\]
where, for every $i \in \N$, $\dc(K_{i-1}/K_i)\le 5/8$. Moreover $(G/K_i)/(K_{i-1}/K_i)=G/K_{i-1}$ and so, from (\ref{gallagher}),
$$\dc(G/K_i)\le \dc(G/K_{i-1})\cdot\dc(K_{i-1}/K_i)\le 5/8\cdot \dc(G/K_{i-1}).$$
By induction $\dc(G/K_i)\le (5/8)^i$ and so, by Proposition \ref{subexpprop}, for any finite generating set $X$ of $G$, we have that $\CR_X(G)\le \dc(G/K_i)\le (5/8)^i$. Since this holds for every $i \in \N$, we obtain that $\CR_X(G)=0$. 
\end{proof}

\begin{cor}
Conjecture \ref{maincon} is true for all finitely generated, virtually nilpotent groups, or equivalently, all groups of polynomial growth.
\end{cor}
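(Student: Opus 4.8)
The plan is to obtain this as an immediate consequence of Theorem~\ref{thmsubsexp}, by verifying that every finitely generated virtually nilpotent group lies in the class to which that theorem applies, namely finitely generated, residually finite groups of stable subexponential growth.

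First I would use Gromov's theorem to reconcile the two descriptions in the statement: a finitely generated group has polynomial growth if and only if it is virtually nilpotent, so it is enough to treat finitely generated virtually nilpotent groups $G$. Next, recall that such a $G$ is residually finite: a finitely generated nilpotent group is residually finite by the classical theorem of Hirsch (equivalently Mal'cev), and a finite extension of a residually finite group is again residually finite, so the virtually nilpotent case follows. Finally, $G$ has stable subexponential growth: by Bass's formula (\ref{eqn:pol_bounds}) there are constants $A, B$ and an integer $d$ with $A n^d \le |\B_{G,X}(n)| \le B n^d$ for every finite generating set $X$, whence
\[
\lim_{n\to\infty} \frac{|\B_{G,X}(n+1)|}{|\B_{G,X}(n)|} = 1,
\]
which is the defining condition of Definition~\ref{stable} (this is exactly (\ref{eqn:ball_ratio}) with $k = 1$).

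Having checked these three points, Theorem~\ref{thmsubsexp} applies directly and yields that, for every finite generating set $X$, $\CR_X(G) > 0$ if and only if $G$ is virtually abelian; that is, Conjecture~\ref{maincon} holds for $G$, and hence for all groups of polynomial growth. There is no substantive obstacle here: the proof is purely a matter of matching hypotheses, the only care needed being the accurate invocation of the external inputs (Gromov's polynomial growth theorem, Bass's volume estimate, and residual finiteness of finitely generated nilpotent groups).
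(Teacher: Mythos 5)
Your proposal is correct and matches the paper's (implicit) argument exactly: the corollary is deduced from Theorem~\ref{thmsubsexp} by invoking Gromov's theorem, Bass's polynomial bounds (\ref{eqn:pol_bounds}) to verify stable subexponential growth, and residual finiteness of finitely generated virtually nilpotent groups. Nothing further is needed.
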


\subsection{Virtually Abelian groups}

The goal of this section is to prove:

\begin{thm} \label{indx}
	Let	$G$ be a finitely generated, virtually abelian group, and $X$, $Y$ be finite generating sets for $G$. Then $\CR_X(G) = \CR_Y(G)$.
\end{thm}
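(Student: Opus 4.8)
The plan is to show that, for every finite generating set $X$, the $\limsup$ defining $\CR_X(G)$ is in fact a limit equal to an explicit rational number built only from the finite quotient data of $G$; since that number does not involve $X$, the equality $\CR_X(G)=\CR_Y(G)$ follows at once. To set up, note that being finitely generated and virtually abelian, $G$ has a \emph{normal} subgroup $A\cong\Z^d$ of finite index $m:=[G:A]$, where $d$ is the degree of polynomial growth of $G$ (intersect the finitely many conjugates of a finite-index copy of $\Z^d$). Put $Q:=G/A$, let $\phi\colon Q\to\GL_d(\Z)$ be the conjugation action on $A$, and let $Q_0:=\ker\phi$. The word metric $|\cdot|_X$ of $G$ restricts to a (non-geodesic) length function on $A$; write $\|\cdot\|_X$ for its stable norm on $\R^d$, $\|a\|_X:=\lim_n |a^n|_X/n$, with unit ball $B_X$. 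I will use the standard fact that $A\cap\B_X(n)$ lies at bounded Hausdorff distance from $nB_X$, so $|A\cap\B_X(n)|=\mathrm{vol}(B_X)\,n^d+o(n^d)$; with Proposition~\ref{pepprop} this gives $|\B_X(n)|=m\,\mathrm{vol}(B_X)\,n^d+o(n^d)$, hence $|A\cap\B_X(n)|=\tfrac1m|\B_X(n)|+o(|\B_X(n)|)$. I will also use that, because $B_X$ is a polytope, a spherical shell of bounded $\|\cdot\|_X$-width and the intersection of $B_X$-balls with any fixed proper rational affine subspace each contain only $O(n^{d-1})$ points of $A$.

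The one genuinely new ingredient is that $\|\cdot\|_X$ is $\phi(Q)$-invariant: if $g\in G$ maps to $q\in Q$, then $|(\phi(q)a)^n|_X=|g\,a^n\,g^{-1}|_X=|a^n|_X+O(1)$, so dividing by $n$ and letting $n\to\infty$ gives $\|\phi(q)a\|_X=\|a\|_X$. Consequently every orbit of any (possibly affine) action of a subgroup of $Q$ on $A$ through $\phi$ sits, up to a bounded error from affine shifts, inside a single $\|\cdot\|_X$-sphere. This is exactly what makes the boundary terms in the count below negligible.

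Next I would organise the conjugacy count by image in $Q$: $|C_X(n)|=\sum_{[q]}\bigl|C^{[q]}_X(n)\bigr|$, summed over $Q$-conjugacy classes $[q]$, where $C^{[q]}_X(n)$ is the set of $G$-conjugacy classes mapping into $[q]$ with a representative in $\B_X(n)$. Fix a preimage $g_0$ of $q$. A routine computation shows that conjugation by $A$ shifts the $A$-coordinate inside the coset $g_0A$ by the sublattice $L_q:=(\phi(q)^{-1}-\id)(A)$, and that conjugation by preimages of $C_Q(q)$ descends to an affine action of $C_Q(q)$ on $A/L_q$. If $q\notin Q_0$ then $\phi(q)-\id\neq 0$, so $A/L_q$ has rank at most $d-1$; since $|g_0a|_X=|a|_X+O(1)$, this forces $\bigl|C^{[q]}_X(n)\bigr|=O(n^{d-1})=o(|\B_X(n)|)$. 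If instead $q\in Q_0$ then $L_q=0$, the conjugacy length of $[g_0a]$ equals $\|a\|_X+O(1)$, and by the invariance above it is constant along the finite $C_Q(q)$-orbit of $a$ up to $O(1)$; writing $K_q\le C_Q(q)$ for the kernel of the affine action, a generic $a$ has orbit of size $|C_Q(q)|/|K_q|$ while the non-generic $a$ lie in finitely many proper rational affine subspaces. Counting the orbits that fall inside $\{\,\|\cdot\|_X\le n\,\}$ — straddling orbits and extra-stabiliser points contributing only $O(n^{d-1})$ — gives $\bigl|C^{[q]}_X(n)\bigr|=\dfrac{|K_q|}{|C_Q(q)|}\,|A\cap\B_X(n)|+O(n^{d-1})$. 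Summing over $[q]$ and using $|A\cap\B_X(n)|=\tfrac1m|\B_X(n)|+o(|\B_X(n)|)$,
$$\CR_X(G)=\lim_{n\to\infty}\frac{|C_X(n)|}{|\B_X(n)|}=\frac1m\sum_{[q]\subseteq Q_0}\frac{|K_q|}{|C_Q(q)|},$$
which depends only on $Q$, on $\phi$, and on the extension $1\to A\to G\to Q\to 1$, not on $X$; hence $\CR_X(G)=\CR_Y(G)$.

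I expect the main obstacle to be the second case of the count: deciding exactly which conjugacy classes over $[q]$ actually \emph{meet} $\B_X(n)$ rather than merely lying near it, i.e.\ showing the orbits that straddle the boundary are $o(n^d)$. The $\phi(Q)$-invariance of the stable norm (pinning each orbit to a sphere) together with the polytopal shape of $A\cap\B_X(n)$ (so shells and lower-dimensional slices are $o(n^d)$) are precisely what make this work. A secondary nuisance is the affine, rather than linear, nature of the $C_Q(q)$-action when $q\neq 1$, which only translates the regions being counted by a bounded vector and so does not affect leading-order asymptotics; and one should double-check the elementary fact (or quote it) that word metrics on $A$ induced from $G$ differ from their stable norm by $O(1)$, i.e.\ that $A\cap\B_X(n)$ is polytope-shaped.
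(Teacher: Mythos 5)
Your proposal is correct in substance and, once unwrapped, follows the same skeleton as the paper's argument, though it packages it quite differently and proves more. The common core is: (1) conjugacy classes lying over a coset $yA$ on which conjugation acts non-trivially (your $q\notin Q_0$; the paper's $y\notin B$, $B$ the centraliser of $A$) are counted by cosets of the non-trivial sublattice $(I-M_y)A$, hence are negligible by Proposition \ref{pepprop}/\ref{basicsvab}; (2) on the remaining part, the class size is constant along $A$-cosets and elements with exceptional centralisers lie in finitely many infinite-index cosets, hence are negligible; (3) classes are counted as (points)/(class size) up to boundary terms controlled by the fact that conjugators can be taken of bounded length. Where you diverge is in the tools for step (3): the paper pins each finite conjugacy class inside a word-metric shell of bounded width $2k$ and uses $|\B_{G,X}(n+2k)|/|\B_{G,X}(n)|\to 1$ (equation (\ref{eqn:ball_ratio})), which needs nothing beyond Bass's polynomial bounds; you instead invoke the stable norm, its $\phi(Q)$-invariance, and a Burago-type theorem that the induced metric on $A$ is within $O(1)$ of a (polytopal) norm. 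That is the one genuine exposure in your write-up: the $O(1)$ comparison is true (Burago's theorem applies, since the induced metric on $A$ is $A$-invariant and quasi-isometric to Euclidean) but it is a real theorem requiring citation, and the polytope/volume refinements ($|\B_X(n)|=m\,\mathrm{vol}(B_X)n^d+o(n^d)$) are stronger than anything you actually use — every shell and subspace estimate you need already follows from Proposition \ref{pepprop} and (\ref{eqn:ball_ratio}). What your route buys in exchange is a genuinely stronger conclusion: the limsup is an honest limit, with the explicit value $\frac1m\sum_{[q]\subseteq Q_0}|K_q|/|C_Q(q)|$ (which agrees with the paper's $\sum_r\delta_r/r$ under $r=m/|K_q|$), visibly determined by the extension $1\to A\to G\to Q\to 1$ alone; the paper only establishes that each density $\delta/r$ is independent of $X$ without computing it, and relegates the "depends only on finite quotients" statement to a remark. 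If you replace the stable-norm apparatus by the bounded-conjugator trick, your proof becomes both self-contained and sharper than the one in the paper.
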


It will be useful to have the following shorthand:

\begin{defn}
Let $G$ be generated by the finite set $X$. A subset, $S$, of $G$ is {\em generic} if $\limsup_{n\rightarrow \infty}\frac{|S\cap \B_{G, X}(n)|}{|\B_{G, X}(n)|} = 1$, and {\em negligible} if the limit is $0$. 
\end{defn}

 Given a group, $G$, with finite generating set $X$, 
a finitely generated subgroup, $H$, of $G$ is said to be {\em undistorted} if any word metric on $H$ is bi-Lipschitz equivalent to any word metric on $G$, when restricted to $H$. This makes sense since any two finite generating sets on a group induce bi-Lipschitz equivalent word metrics.

It is easy to see that a finite index subgroup is always undistorted, and that a subgroup $H$ is undistorted if and only if it has an undistorted subgroup of finite index. Retracts are also undistorted (recall that a retract of $G$ is the image of an endomorphism $\rho: G \to G$ such that $\rho^2 = \rho$). 

We now collect the following facts:
\begin{prop} \label{basicsvab}
	Suppose that $G$ is a finitely generated virtually abelian group, with finite generating set $X$, having a subgroup of finite index isomorphic to $\Z^d$.
	\begin{enumerate}[(i)]
		\item Every subgroup of $G$ is both finitely generated and undistorted. 
		\item Let $H$ be an infinite subgroup of $G$. Let $T(n)=T_{H,X}(n)$ (for transversal) be the number of cosets of $H$ that have a representative in $\B_{G,X}(n)$. Then, 
		$$
		\lim_{n \to \infty} \frac{T(n)}{|\B_{G,X}(n)|} = 0.
		$$
	\end{enumerate}
\end{prop}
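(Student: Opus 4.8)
The plan is to handle the two parts separately, leaning on the structure theory of finitely generated abelian groups and on Proposition \ref{pepprop}. For part (i): since $G$ is virtually abelian, it has a finite-index subgroup isomorphic to $\Z^d$, and by intersecting with conjugates we may assume this $\Z^d$ is normal (in fact characteristic of finite index after replacing it by a suitable characteristic subgroup). A subgroup $H \le G$ then meets $\Z^d$ in $H \cap \Z^d$, which is a subgroup of $\Z^d$, hence free abelian of rank $\le d$, hence finitely generated; and $[H : H \cap \Z^d] \le [G : \Z^d] < \infty$, so $H$ is finitely generated. For undistortedness, first note $H \cap \Z^d$ is undistorted in $\Z^d$ (a subgroup of $\Z^d$ is a direct summand of a finite-index subgroup, so it is a retract of a finite-index subgroup, hence undistorted — or argue directly that the inclusion $\Z^k \hookrightarrow \Z^d$ is bi-Lipschitz using that all norms on $\R^d$ are equivalent), and $\Z^d$ is undistorted in $G$ because it has finite index. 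Undistortedness is transitive and passes to/from finite-index subgroups (both facts recorded in the paragraph preceding the proposition), so $H \cap \Z^d$ is undistorted in $G$, and therefore so is $H$, since it contains $H \cap \Z^d$ with finite index.

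For part (ii), the goal is to show the number $T(n)$ of $H$-cosets meeting $\B_{G,X}(n)$ is negligible compared to $|\B_{G,X}(n)|$. The idea is to compare with the abelian case. Let $A \cong \Z^d$ be the finite-index (normal) abelian subgroup, set $H_0 = H \cap A$, which is infinite (since $[H:H_0]<\infty$ and $H$ is infinite) of some rank $k$ with $1 \le k \le d$. First I would reduce to bounding the number of cosets of $H_0$ in $A$ meeting a ball: every coset of $H$ in $G$ meeting $\B_{G,X}(n)$ gives rise to boundedly many cosets of $H_0$ in $G$ meeting $\B_{G,X}(n)$ (bounded by $[G:A]\cdot[H:H_0]$, using a transversal of bounded word length as in the proof of Proposition \ref{subexpprop}), and each coset of $H_0$ in $G$ meeting $\B_{G,X}(n)$ lies in a coset of $A$ and, after translating by a bounded-length transversal element of $A$ in $G$, corresponds to a coset of $H_0$ in $A$ meeting $\B_{G,X}(n+D)$ for a fixed constant $D$. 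Since $A$ is undistorted in $G$, balls in the word metric of $A$ and balls in $\B_{G,X} \cap A$ are comparable up to linear distortion, so it suffices to show: for a rank-$k$ subgroup $H_0 \le \Z^d$, the number of cosets of $H_0$ meeting the ball of radius $n$ in $\Z^d$ is $o(n^d)$; and indeed this count is $\Theta(n^{d-k}) = o(n^d)$ since $k \ge 1$, which one sees by a direct lattice-point / volume estimate (the cosets meeting the ball of radius $n$ are indexed by lattice points in a $(d-k)$-dimensional slice of a ball of radius $O(n)$). Finally, $|\B_{G,X}(n)| = \Theta(n^d)$ by Bass's estimate \eqref{eqn:pol_bounds}, giving the claimed limit $0$. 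Alternatively, one can avoid the explicit lattice count by invoking Proposition \ref{pepprop}: passing to $A$ and its finite-index subgroups, an infinite-index subgroup $H_0$ of $A$ has $|gH_0 \cap \B(n)|/|\B(n)| \to 0$ for every coset; summing the (finitely many relevant per radius) pieces and using that $H_0$-cosets are "spread out" still requires a counting input, so the cleanest route is the direct volume estimate.

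The main obstacle I anticipate is part (ii): specifically, converting "each coset contributes many points but there are few cosets" into a rigorous bound requires care with the distortion constants relating the ambient word metric $|\cdot|_X$ on $G$ to the standard metric on $\Z^d$, and with the bounded-transversal bookkeeping when passing between $H$-cosets in $G$, $H_0$-cosets in $G$, and $H_0$-cosets in $A$. Once one is working inside $\Z^d$ with the Euclidean metric (legitimate by undistortedness and equivalence of norms), the estimate $\#\{\text{cosets of a rank-}k\text{ sublattice meeting }B_n\} = O(n^{d-k})$ is a routine lattice-point count, and $d - k \le d-1 < d$ is exactly what makes the ratio vanish; the rank-$k \ge 1$ hypothesis is used precisely here and corresponds to $H$ (hence $H_0$) being infinite.
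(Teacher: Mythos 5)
Your part (i) is essentially the paper's argument (finite generation from the virtually polycyclic/abelian structure, undistortedness via the fact that a subgroup of $\Z^d$ has finite index in a direct summand, hence is a retract of a finite-index subgroup). For part (ii), however, you take a genuinely different route. You reduce to $H_0 = H\cap A$ and perform a lattice-point count: the cosets of a rank-$k$ sublattice of $\Z^d$ meeting a ball of radius $n$ number $O(n^{d-k})$, and $k\ge 1$ since $H$ is infinite. This works (your transversal/distortion bookkeeping is the right shape, though note the inequality you actually need, $T_H(n)\le T_{H_0}(n)$, is immediate since every $H$-coset meeting the ball contains an $H_0$-coset meeting the ball), and it yields the sharper bound $O(n^{d-k})$. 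The paper instead avoids all of this with a packing argument carried out directly in $G$: if $g_1,\dots,g_{T(n)}\in\B_{G,X}(n)$ lie in distinct cosets of $H$, the sets $g_i\bigl(H\cap\B_{G,X}(n)\bigr)$ are disjoint subsets of $\B_{G,X}(2n)$, so
$$
|\B_{G,X}(2n)| \;\ge\; T(n)\,|H\cap\B_{G,X}(n)| \;\ge\; T(n)\,\epsilon n,
$$
where the linear lower bound $|H\cap\B_{G,X}(n)|\ge\epsilon n$ comes from $H$ containing an element of infinite order; combined with Bass's bounds $An^d\le|\B_{G,X}(n)|\le Bn^d$ this gives $T(n)=O(n^{d-1})=o(n^d)$. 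The paper's argument is shorter, needs no normality of $A$, no passage to $H_0$, and no coset-transversal bookkeeping, at the cost of a weaker (but sufficient) exponent; your version buys the precise growth order of $T(n)$ in terms of the rank of $H\cap A$.
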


\begin{proof}
(i) Let $H \leq G$. It is well known that $H$ is finitely generated, as this fact is true in the case where $G$ is virtually polycyclic, which includes the finitely generated virtually nilpotent (and abelian) case.
		
		However, the fact that $H$ is undistorted is not true more generally, and follows from the fact that every subgroup of a finitely generated free abelian group has finite index in a direct summand. In our case, $H$ has a finite index subgroup which is a retract of a finite index subgroup of $G$, and is therefore undistorted in $G$. 
		
(ii) From above, $H$ is finitely generated and undistorted. Since $H$ is infinite, it must contain an element of infinite order, so there exists an $\epsilon > 0$ such that 
		$$
		|H \cap \B_{G,X}(n)| \geq \epsilon n.  
		$$ 
		More precisely, $|H \cap \B_{G,X}(n)|$ will have polynomial bounds of degree $e$, $d \geq e \geq 1$.
		
		Let $A, B, d$ be the constants in (\ref{eqn:pol_bounds}). Then
		$$
		B 2^d n^d \geq |\B_{G,X}(2n)| \geq T(n) |H \cap \B_{G,X}(n)| \geq T(n) \epsilon n.
		$$
		Hence, 
		$$
		0 \leq \lim_{n \to \infty} \frac{T(n)}{|\B_{G,X}(n)|} \leq \lim_{n \to \infty} \frac{B2^d n^{d-1}}{\epsilon A n^d} =0.
		$$
\end{proof}

From now on, we let $G$ be an infinite, finitely generated, virtually abelian group. Let $A$ be a normal, finite index, free abelian subgroup,  
and $B$ be the centraliser of $A$ in $G$. Note that $A$ is a subgroup of $B$, which therefore has finite index.   

\begin{prop}\label{prop:ccnegligible}
Let $G$ be a finitely generated, virtually abelian group and $X$ any finite generating set for $G$. Let $A$ be a normal, finite index, free abelian subgroup, and $B$ be the centraliser of $A$ in $G$. Then the set of minimal length $G$-conjugacy representatives in $G \setminus B$ is negligible.
\end{prop}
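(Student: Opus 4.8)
The plan is to show that the number of minimal-length conjugacy representatives lying outside $B$ is negligible by exhibiting a finite-to-one correspondence between such representatives and cosets of an \emph{infinite-index} subgroup, so that Proposition~\ref{basicsvab}(ii) (or rather its extension to infinite-index subgroups, which follows from the same polynomial-bound argument) forces the count to be $o(|\B(n)|)$. Concretely, for $g \in G\setminus B$ consider the centraliser $C_G(g)$. Since $g$ does not centralise $A$, conjugation by $g$ is a nontrivial automorphism of the finite-index free abelian group $A$, and more importantly $C_G(g)$ cannot contain $A$; since $A$ has finite index in $G$, this means $C_G(g)\cap A$ has infinite index in $A$, hence $C_G(g)$ has infinite index in $G$. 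The conjugacy class of $g$ is in bijection with the cosets $G/C_G(g)$, so it is ``large''. The idea is that a minimal-length representative picks out essentially one coset of $C_G(g)$, and as $g$ ranges over conjugacy classes meeting $\B(n)$ outside $B$ the relevant subgroups, though varying, all contain the fixed infinite-index subgroup relevant to the argument.

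The cleanest way to make ``the subgroups vary'' harmless is to work modulo $A$. First I would reduce to a single coset $gA$ at a time: there are only $[G:A]=m$ cosets, so it suffices to show that for each $g\in G\setminus B$ the set of minimal-length conjugacy representatives contained in the coset $gA$ is negligible. Fix such a $g$. For $a\in A$, the element $ga$ is conjugate to $g\,{}^{a'}\!(\text{stuff})$; more usefully, conjugating $ga$ by elements of $A$ moves it within $gA$, and since $A$ is abelian the $A$-conjugacy class of $ga$ inside $gA$ is the coset $g\cdot[a][A,g]$ where $[A,g]=\{a^{-1}a^g : a\in A\}\le A$ is the image of the endomorphism $a\mapsto a^{-1}a^{g}$ of $A$. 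Because $g\notin B$, this endomorphism is nonzero, so $[A,g]$ is a nontrivial (hence infinite) subgroup of the free abelian group $A$. Therefore the $A$-conjugacy classes inside $gA$ are cosets of $[A,g]$, and the number of these cosets meeting $\B_{G,X}(n)$ is at most $T_{[A,g],X}(n)$ up to bounded additive shifts in the radius — which is $o(|\B(n)|)$ by the argument of Proposition~\ref{basicsvab}(ii) applied to the \emph{infinite} subgroup $[A,g]$ (that proof only used infiniteness of the subgroup, not finite index of anything). Since each $G$-conjugacy class inside $gA$ is a union of $A$-conjugacy classes inside $gA$ (and hence contains at most one minimal-length representative lying in this union, or at any rate the minimal-length representatives are a subset of the $A$-class representatives), the set of minimal-length $G$-conjugacy representatives in $gA$ is contained in a set of $A$-coset representatives for $[A,g]$ in $gA$, up to enlarging the radius by a constant to account for the conjugators $g_i$; this is negligible.

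Summing over the finitely many cosets $gA$ with $g\notin B$ gives that the minimal-length $G$-conjugacy representatives in $G\setminus B$ form a negligible set, as claimed.

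I expect the main obstacle to be the bookkeeping around ``minimal-length representative'': I must be careful that a single $G$-conjugacy class can have several minimal-length representatives, possibly in different cosets $gA$, and that an $A$-conjugacy class and a $G$-conjugacy class are different objects. The safe route is not to claim a bijection but only an upper bound: the set of \emph{all} minimal-length $G$-conjugacy representatives inside a fixed coset $gA$ injects, after translating by $g^{-1}$, into $A/[A,g]$ — or at worst is covered by finitely many translates of such a quotient once we allow the radius slack coming from the finitely many coset reps $g_1,\dots,g_m$ — and then the polynomial-growth transversal estimate of Proposition~\ref{basicsvab}(ii), extended verbatim to infinite (not necessarily finite-index) subgroups using only that $[A,g]$ contains an element of infinite order, closes the argument. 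A secondary point to check is that $g\notin B$ genuinely forces $[A,g]\neq 1$: if $a^{-1}a^g=1$ for all $a\in A$ then $g$ centralises $A$, i.e. $g\in B$, contradiction — so this is immediate.
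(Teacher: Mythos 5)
Your proposal is correct and follows essentially the same route as the paper: restrict to one coset $yA$ at a time, observe that $A$-conjugation moves $ya$ within $yA$ along cosets of the image of $a\mapsto a^{-1}a^{y}$ (the paper's $(I-M_y)A$), which is nontrivial hence infinite because $y\notin B$, so minimal-length representatives inject into a transversal of that subgroup and Proposition~\ref{basicsvab}(ii) gives negligibility. Note that Proposition~\ref{basicsvab}(ii) is already stated for arbitrary infinite subgroups, so no extension is needed; your overcounting across cosets is exactly how the paper handles the multiplicity of minimal-length representatives.
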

\begin{proof}
 Let $y \not\in B$ be an element of $G$ and denote by $C_{yA}(n)$ the number of conjugacy classes which have a representative in $\B_{G,X}(n) \cap yA$. Then we claim that 
	$$
	\lim_{n \to \infty} \frac{C_{yA}(n)}{|\B_{G,X}(n)|} = 0.
	$$
	For each conjugacy class with a representative in $\B_{G,X}(n)\cap yA$, choose a shortest such representative, and denote this set of representatives $Z = \{ y a_i \ : \ a_i \in A\}$. From these, extract the set $U = \{ a_i \}$, rewriting the $a_i$ as geodesics if required. Note that, for some fixed $k$ (the length of $y$), we have
	$$
	C_{yA}(n) = | Z \cap \B_{G,X}(n)| \leq | U \cap \B_{G,X}(n+k)|.
	$$
	
	Now let $M_y$ denote the automorphism of $A$ induced by conjugation with $y$, which we think of as a matrix. For any $a_1,a_2 \in A$ we have that:
	$$
	a_2^{-1} (ya_1) a_2 = y (a_1 + (I - M_y)a_2), 
	$$
	if we switch to an additive notation in $A$. Let $H$ be the image of $(I-M_y)$ in $A$, that is, $H=\langle [a,y] \ : a\in A \rangle$. Since $y \not\in B$, we can conclude that $H$ is a non-trivial subgroup of $A$ and is therefore infinite. Moreover, the elements of $U$ are all in distinct cosets of $H$. Hence, by Proposition \ref{basicsvab} part (ii), we may conclude that:
	$$
	\begin{array}{rcl}
	\frac{| Z \cap \B_{G,X}(n)|}{|\B_{G,X}(n)|} & \leq &  \frac{| U \cap \B_{G,X}(n+k)|}{|\B_{G,X}(n)|} \\ \\
	& = & \frac{T_{H,X}(n+k)}{|\B_{G,X}(n+k)|} \frac{|\B_{G,X}(n+k)|}{|\B_{G,X}(n)|} \to 0.
	\end{array}
	$$
	\end{proof}

Proposition \ref{prop:ccnegligible} shows that the only elements of $G$ that contribute to the conjugacy ratio are the elements of $B$. (The representative of a conjugacy class might not have a shortest representative in our particular coset $yA$, but varying $y$ we see that we have an overcount of the number of conjugacy classes in the complement of $B$, which nonetheless gives $0$.)

Thus the strategy for proving Theorem \ref{indx} is the following. First note that each element of $B$ has finite conjugacy class in $G$. We split the elements in $B$ into those which centralise elements from outside of $B$ and those whose centraliser is completely in $B$. Proposition \ref{prop:negligible} shows the former ones form a negligible set, and the latter ones a generic set of $B$ (Corollary \ref{cor:generic}); moreover, for the latter ones the size of the $G$-conjugacy class is the index of the $B$-centraliser, which is constant for elements in the same $A$-coset. Therefore, each coset (or, rather, conjugacy class of cosets) of $A$ contributes a fixed amount to the conjugacy ratio, which is algebraically determined. 

We use the notation $Z_K(g)$ for the $K$-centraliser of $g \in G$, that is, $Z_K(g)= \{ k \in K \ : \ k^{-1} g k = g\}$.
\begin{lem}
	Let $x \in G$. Then $Z_B(x) = Z_B(xa)$ for any $a \in A$. Moreover, $[G:Z_B(x)] < \infty$ if $x\in B$. 
\end{lem}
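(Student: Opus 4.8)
The statement to prove has two parts: that $Z_B(x) = Z_B(xa)$ for all $a \in A$, and that $[G : Z_B(x)] < \infty$ when $x \in B$. I will handle them separately.

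For the first part, the plan is to compute directly. Take $b \in B$, so $b$ centralises $A$, hence commutes with $a$. Then $b^{-1}(xa)b = (b^{-1}xb)(b^{-1}ab) = (b^{-1}xb)a$. So $b^{-1}(xa)b = xa$ if and only if $(b^{-1}xb)a = xa$, i.e. if and only if $b^{-1}xb = x$. This shows $Z_B(xa) = Z_B(x)$, and the key point used is precisely that every element of $B$ commutes with every element of $A$ (that is the definition of $B$ as the centraliser of $A$). This is a one-line computation with no real obstacle; I just need to be careful that I am conjugating within $B$ so that both $b$ and $b^{-1}ab = a$ make sense.

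For the second part, suppose $x \in B$. I want $[G : Z_B(x)] < \infty$, equivalently $[B : Z_B(x)] < \infty$ since $[G:B] < \infty$. The cleanest route: $Z_B(x) = B \cap Z_G(x)$, so it suffices to show $[B : B \cap Z_G(x)] < \infty$, which follows if $[G : Z_G(x)] < \infty$, i.e. if $x$ has finite conjugacy class in $G$. Now since $x \in B$ and $A \le B$ has finite index with $A$ normal in $G$, I would argue that $Z_G(x)$ contains $Z_A(x)$; and since $x$ centralises $A$ (as $x \in B$), in fact $Z_A(x) = A$, so $A \le Z_G(x)$. Since $A$ has finite index in $G$, we conclude $[G : Z_G(x)] \le [G:A] < \infty$, hence $[B : Z_B(x)] < \infty$ and a fortiori $[G : Z_B(x)] < \infty$. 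The only thing to double-check is the inclusion $A \le Z_G(x)$: this is just the statement that $x$, being in the centraliser of $A$, commutes with every element of $A$ — again immediate from the definition of $B$.

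I expect no genuine obstacle here; the statement is essentially a bookkeeping lemma recording two elementary consequences of the definition of $B = Z_G(A)$. The mild care needed is only in keeping straight which centraliser (in $A$, in $B$, or in $G$) is being discussed and invoking $[G:A] < \infty$ at the right moment to pass from finiteness of a $G$-index to finiteness of the $B$-index $[G : Z_B(x)]$.
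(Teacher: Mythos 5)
Your proof is correct; the paper states this lemma without proof, and your argument (using that every $b\in B$ satisfies $b^{-1}ab=a$ to cancel $a$ in $b^{-1}(xa)b=(b^{-1}xb)a$, and that $x\in B$ forces $A\le Z_G(x)$, hence $[G:Z_B(x)]\le [G:B]\,[G:Z_G(x)]<\infty$ via $Z_B(x)=B\cap Z_G(x)$ and $[G:A]<\infty$) is exactly the routine verification the authors intended to omit. Note you could shorten the second part slightly by observing directly that $A\le Z_B(x)$, since $A\le B$ and every element of $A$ commutes with $x$, giving $[G:Z_B(x)]\le[G:A]<\infty$ in one step.
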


\begin{prop}\label{prop:negligible}
	The set $\bigcup_{y \not\in B} Z_B(y)$ is a finite union of infinite index subgroups of $G$. Hence this set is negligible with respect to any finite generating set.
\end{prop}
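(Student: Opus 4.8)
The plan is to reduce the whole statement to a single index computation inside the free abelian subgroup $A$. First I would observe that the union is finite: by the preceding Lemma, $Z_B(y) = Z_B(ya)$ for every $a \in A$, so $Z_B(y)$ depends only on the coset $yA$; since $[G:A] < \infty$ there are only finitely many such cosets, hence $\{ Z_B(y) : y \notin B \}$ is a finite collection of subgroups of $B$, and $\bigcup_{y \notin B} Z_B(y)$ is a finite union of subgroups of $G$.

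Next I would show that each $Z_B(y)$ with $y \notin B$ has infinite index in $G$. Let $M_y \in \Aut(A)$ denote conjugation by $y$, viewed, after choosing a basis and writing $A$ additively, as an element of $\GL_d(\Z)$. Since $y \notin B = Z_G(A)$ we have $M_y \neq I$, so $M_y - I$ is a nonzero endomorphism of $\Z^d$; a nonzero endomorphism of $\Z^d$ has kernel of rank at most $d-1$ (a rank-$d$ subgroup has finite index, which would force the image to be a finite --- hence trivial --- subgroup of the torsion-free group $\Z^d$). Therefore $Z_A(y) = \ker(M_y - I) = Z_B(y) \cap A$ has infinite index in $A$. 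As $[B:A] < \infty$ we get $[Z_B(y) : Z_B(y) \cap A] \le [B:A] < \infty$, while $[B : Z_B(y) \cap A] = [B:A]\,[A : Z_A(y)] = \infty$; hence $[B : Z_B(y)] = \infty$, and so $[G : Z_B(y)] = [G:B]\,[B:Z_B(y)] = \infty$.

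Finally I would record that a finite union of infinite index subgroups of $G$ is negligible for every finite generating set. For a single infinite index subgroup $H \le G$: the balls $\B_{G,X}(n)$ exhaust $G$, so the number $T_{H,X}(n)$ of cosets of $H$ having a representative in $\B_{G,X}(n)$ tends to infinity, and the inequality $T_{H,X}(n)\,|H \cap \B_{G,X}(n)| \le |\B_{G,X}(2n)|$ used in the proof of Proposition \ref{basicsvab}(ii), together with the bounds $An^d \le |\B_{G,X}(n)| \le Bn^d$ from (\ref{eqn:pol_bounds}), gives
$$\frac{|H \cap \B_{G,X}(n)|}{|\B_{G,X}(n)|} \;\le\; \frac{|\B_{G,X}(2n)|}{T_{H,X}(n)\,|\B_{G,X}(n)|} \;\le\; \frac{B\,2^d}{A\,T_{H,X}(n)} \;\longrightarrow\; 0,$$
and a finite union of negligible sets is negligible. (Alternatively, by Proposition \ref{basicsvab}(i) each such $H$ is undistorted and virtually abelian of rank $\le d-1$, so $|H \cap \B_{G,X}(n)| = O(n^{d-1})$.) There is no real obstacle here; the one point that carries the argument is the linear-algebra fact that a non-identity automorphism of $\Z^d$ has fixed subgroup of rank strictly less than $d$ --- this is precisely what makes each $Z_B(y)$ thin --- and it remains only to note that, thanks to Bass's uniform polynomial bounds, negligibility does not depend on the chosen finite generating set.
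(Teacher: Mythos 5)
Your proof is correct and follows essentially the same route as the paper: finiteness of the union via $Z_B(y)=Z_B(ya)$ and $[G:A]<\infty$, and infinite index via the observation that the fixed subgroup $Z_A(y)=\ker(M_y-I)$ of the nontrivial automorphism $M_y$ has rank at most $d-1$ (the paper phrases this as $Z_A(y)$ being a proper pure subgroup, hence a proper direct summand, of $A$). Your explicit verification that an infinite-index subgroup is negligible, via the inequality $T_{H,X}(n)\,|H\cap\B_{G,X}(n)|\le|\B_{G,X}(2n)|$ and Bass's bounds, is a sound substitute for the paper's appeal to the last part of Proposition \ref{pepprop}.
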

\begin{proof}
	Since $Z_B(y)=Z_B(ya)$ for any $a\in A$, this is a finite union. So it is enough to show that each $Z_B(y)$ has infinite index. 
	
	In fact, it is sufficient to show that $Z_A(y) = Z_B(y) \cap A$ is an infinite index subgroup of $A$. However, $Z_A(y)$, is a pure subgroup of $A$; that is, if $a^m \in Z_A(y)$ and $m \neq 0$ then $a \in Z_A(y)$. This implies that $Z_A(y)$ is a direct summand of $A$. But since $y \not\in B$, this direct summand cannot be the whole of $A$ and is therefore an infinite index subgroup of $A$ as required. 
\end{proof}

\begin{cor}\label{cor:generic}
	There is a generic set of elements of $B$ (with respect to any generating set) whose centraliser lies entirely in $B$.
\end{cor}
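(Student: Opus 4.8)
The plan is to identify the set in question as the complement in $B$ of a finite union of infinite-index subgroups of $B$, and then to invoke genericity. For $b \in B$, note that $Z_G(b) \not\subseteq B$ holds precisely when some $y \in G \setminus B$ commutes with $b$; since $b \in B$, this is the same as saying $b \in Z_B(y)$. Hence
$$\{\, b \in B : Z_G(b) \not\subseteq B \,\} = \bigcup_{y \in G \setminus B} Z_B(y).$$
By the lemma preceding Proposition \ref{prop:negligible} ($Z_B(y) = Z_B(ya)$ for all $a \in A$), the subgroup $Z_B(y)$ depends only on the coset $yA$, so the right-hand side is a union of only finitely many subgroups of $B$, one per coset of $A$ contained in $G \setminus B$. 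By Proposition \ref{prop:negligible} each of these subgroups has infinite index in $G$, and since $[G:B] < \infty$ it therefore has infinite index in $B$ as well.

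It then remains to observe that an infinite-index subgroup $H$ of $B$ is negligible with respect to every finite generating set $Y$ of $B$. But $B$ is again a finitely generated virtually abelian group having $A$ as a finite-index free abelian subgroup, so Proposition \ref{basicsvab} applies verbatim with $(B,Y)$ in place of $(G,X)$: writing $T_H(n)$ for the number of cosets of $H$ meeting $\B_{B,Y}(n)$, the estimate $|\B_{B,Y}(2n)| \ge T_H(n)\,|H \cap \B_{B,Y}(n)|$ together with the polynomial growth of $B$ and the fact that $T_H(n) \to \infty$ forces $|H \cap \B_{B,Y}(n)|/|\B_{B,Y}(n)| \to 0$. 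A finite union of negligible sets is negligible, so $\bigcup_{y \in G\setminus B} Z_B(y)$ is negligible in $B$; equivalently, $\{\, b \in B : Z_G(b) \subseteq B \,\}$ is generic in $B$, for every finite generating set of $B$.

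I do not expect a serious obstacle here: every step is either the set-theoretic identity above, a citation of Proposition \ref{prop:negligible}, or a direct application of Proposition \ref{basicsvab} to $B$ rather than $G$. The one point that needs care is that ``generic'' in this corollary must be read relative to balls in $B$ — in $G$ the subgroup $B$ is itself not generic unless $B = G$ — so the negligibility estimate has to be carried out inside $B$ and cannot simply be quoted from the $G$-version of Proposition \ref{prop:negligible}. (Alternatively, one may quote the $G$-negligibility of $\bigcup_{y \notin B} Z_B(y)$ from Proposition \ref{prop:negligible} and transfer it to $B$ using that $B$ is undistorted in $G$ and that $B$ and $G$ have polynomial growth of the same degree, but running the estimate directly in $B$ is cleaner.)
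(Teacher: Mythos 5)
Your proof is correct and follows the paper's one-line argument: the set of $b \in B$ whose centraliser escapes $B$ is exactly $\bigcup_{y\notin B} Z_B(y)$, which is negligible by Proposition~\ref{prop:negligible}. The only difference is that you additionally transfer negligibility from $G$ to $B$ (reading ``generic set of elements of $B$'' relative to balls of $B$), which is a reasonable precaution given the ambiguity of the statement, but is not needed for how the corollary is actually used in the proof of Theorem~\ref{indx}, where $\mathcal{N}$ is measured against $\B_{G,X}(n)$.
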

\begin{proof}
	If for some $b \in B$ there exists $t\notin B$ such that $[t,b]=1$, then $b \in Z_B(t) \subset \bigcup_{y \not\in B} Z_B(y)$, which is negligible by Proposition \ref{prop:negligible}.
\end{proof}

\noindent {\em Proof of Theorem~\ref{indx}}: For each $r$, let $A_r$ be the elements $b\in B$ for which $Z_B(b)$ has index $r$ in $G$ (and therefore conjugacy class size $r$ in $G$), and let $\mathcal{N}=\{b\in B\ : Z_B(b) \not \subset B\}$, that is, $\mathcal{N}$ is the set of elements of $B$ whose centraliser does not fully lie in $B$. Then $\mathcal{N} = \bigcup_{y \not\in B} Z_B(y)$ and so by Corollary \ref{cor:generic} it is a negligible set. 

Since $A\leq Z_B(b)\leq G$ for any $b\in B$ and $A$ has finite index in $G$, there are only finitely many values for the index of $Z_B(b)$ in $G$, and thus finitely many $r$ for which $A_r$ is non-empty. Moreover, since $Z_B(y) = Z_B(ya)$ for any $y\in B\setminus A$ and $a\in A$, if $y\in A_r$, then $ya\in A_r$, so each non-empty $A_r$ is a union of $A$-cosets and thus 
\begin{equation}\label{eqn:coset}
\lim_{n \to \infty} \frac{|A_r \cap \B_{G,X}(n)|}{|\B_{G,X}(n)|} = \delta,
\end{equation}
where $\delta$ is $1/[G:A]$ times the number of $A$-cosets in $A_r$, so is independent of $X$. 


It is easy to see that there is an integer, $k$, such that if two elements of $B$ are conjugate in $G$, then they are conjugate by an element of length at most $k$;  the same holds for $A_r$ as $A_r \subset B$. Moreover, since $B$ is normal in $G$, it is easy to see that $G$ acts on $A_r$ by conjugation; $G$ acts by conjugation on $\mathcal{N}$, and hence on $A_r \setminus \mathcal{N}$, as well. 

Let $C_n$ be the number of conjugacy classes of $G$ which meet $\B_{G,X}(n)$ and are contained in $A_r\setminus \mathcal{N}$. Then, 
\begin{equation}\label{eqn:cArN}
| (A_r \setminus\mathcal{N}) \cap \B_{G,X}(n)| \leq r C_n \leq  |(A_r \setminus \mathcal{N})\cap \B_{G,X}(n+2k)|.
\end{equation}

The first inequality comes from the fact that each element of $A_r \setminus \mathcal{N}$ has $r$ conjugates in $G$, and the second from the fact that each of the conjugates can be obtained from a conjugator of length at most $k$.  

Now
$$
 \frac{C_n}{|\B_{G,X}(n)|} \leq \frac{|C_{G,X}(n) \cap A_r|}{|\B_{G,X}(n)|} \leq  \frac{C_n}{|\B_{G,X}(n)|} +  \frac{|\mathcal{N} \cap \B_{G,X}(n)|}{|\B_{G,X}(n)|}, 
$$
and by (\ref{eqn:coset}), (\ref{eqn:cArN}) and Corollary \ref{cor:generic}
$$
\lim_{n \to \infty} \frac{C_n}{|\B_{G,X}(n)|} = \lim_{n \to \infty} \left(\frac{C_n}{|\B_{G,X}(n)|} +  \frac{|\mathcal{N} \cap \B_{G,X}(n)|}{|\B_{G,X}(n)|}\right) = \frac{\delta}{r},
$$
so we get $$\lim_{n \to \infty}\frac{|C_{G,X}(n) \cap A_r|}{|\B_{G,X}(n)|}=\frac{\delta}{r}.$$

Hence the number of conjugacy classes of $G$ that meet $A_r$ is independent of the generating set. Summing over the finitely many $r$ gives the result.\qed

%

\medskip

\begin{rem}
	The same ideas as those just presented can be used to show that, if $G$ is a finitely generated, virtually abelian group, and $X$ is any finite generating set, then $\CR_X(G) = \inf_{N \unlhd_f G} \CR(G/N)$. That is, the conjugacy ratio is equal to the infimum of conjugacy ratios of the finite quotients. Hence, if one were to measure the conjugacy ratio using invariant means, one would get the same numerical value. Unpublished results indicate that this is the same as the degree of commutativity.
	
	For similar reasons, the same is true whenever $G$ is a finitely generated virtually nilpotent group, the virtually abelian case being the key one. 
\end{rem}


\section{Results for other families of groups}\label{generalresults}
\subsection{Hyperbolic groups}
In this section we prove Conjecture \ref{maincon} for non-elementary hyperbolic groups. 

\medskip

We will write $f(n) \sim g(n)$ to mean $f(n)/g(n) \rightarrow 1$ as $n \rightarrow \infty$.

\begin{thm}\label{thm:hyperbolic}
Let $G$ be a non-elementary hyperbolic group. Then $\CR_X(G)= 0$ for any finite generating set $X$.
\end{thm}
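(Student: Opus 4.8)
The plan is to prove the quantitative bound $|C_{G,X}(n)| \preceq \lambda^n / n$, where $\lambda = Exp_X(G)$; this suffices, because a non-elementary hyperbolic group has exponential growth, so $\lambda > 1$, and by Coornaert's sharp growth estimate $|\B_{G,X}(n)| \asymp \lambda^n$, whence $|C_{G,X}(n)|/|\B_{G,X}(n)| \preceq 1/n \to 0$. (In fact only $|C_{G,X}(n)| = o(\lambda^n)$ is really needed.)

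The count splits into torsion and infinite-order classes. A hyperbolic group has finitely many conjugacy classes of torsion elements, contributing $O(1)$, so I focus on infinite-order classes. Given such a class, pick a geodesic word $w = x_1 \cdots x_\ell$ representing an element of minimal length $\ell$ in the class. Its cyclic permutations $w^{(i)} = x_{i+1} \cdots x_\ell x_1 \cdots x_i$ represent conjugates of length $\le \ell$. The geometric heart of the argument --- this is essentially the content of Coornaert--Knieper's analysis of conjugacy growth in hyperbolic groups, which I would cite rather than reprove --- is that there is a constant $c > 0$, independent of the class, so that for $\ell$ large the $w^{(i)}$ represent at least $c\ell$ distinct group elements, \emph{unless} $w$ represents a proper power $u^k$, in which case this count drops to roughly $\ell / k$.

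For the non-power classes, let $p_\ell$ denote the number of them with minimal length exactly $\ell$. The $\ge c\ell$ short representatives coming from distinct classes are themselves distinct group elements lying in $\B_{G,X}(\ell)$, so $c \sum_{\ell \le n} \ell\, p_\ell \le |\B_{G,X}(n)| \le B\lambda^n$; in particular $p_n \le (B/c)\,\lambda^n/n$, and hence $\sum_{\ell \le n} p_\ell \le (B/c) \sum_{\ell \le n} \lambda^\ell/\ell \asymp \lambda^n/n$, the last sum behaving like a geometric series of ratio $\lambda > 1$. For the proper-power classes, I would invoke that infinite-order elements of a hyperbolic group have stable translation length bounded below by a positive constant and comparable (up to a multiplicative constant) to conjugacy length; thus a class $[u^k]$ of conjugacy length $\le n$ forces $k = O(n)$ and forces the primitive root $[u]$ to have conjugacy length $O(n/k)$, so the number of proper-power classes of conjugacy length $\le n$ is bounded by $\sum_{k \ge 2}$ (number of classes of conjugacy length $\lesssim n/k$), which a short induction on $n$ (or a direct comparison against $\sum_{k\ge 2} B\lambda^{O(n/k)}$, dominated by the $k = 2$ term) shows to be $o(\lambda^n)$. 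Adding the two contributions yields $|C_{G,X}(n)| \preceq \lambda^n/n$.

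The main obstacle is the cyclic-shift lemma in the infinite-order case: showing \emph{uniformly} that cyclically permuting a conjugacy-geodesic word produces many distinct short conjugates requires the fine geometry of (quasi-)geodesics in $\delta$-hyperbolic spaces --- the Morse/stability lemma, bounded backtracking along $\langle g \rangle$-orbits, and control of word-periodicity --- which is precisely what Coornaert--Knieper establish; for groups with torsion one would additionally appeal to the refinements of Antol\'in--Ciobanu. A secondary subtlety is keeping the proper-power estimate clean without assuming that the translation-length/conjugacy-length comparison constant is small, which is what the bootstrap over $n$ takes care of.
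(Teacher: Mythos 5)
Your proposal is correct and takes essentially the same route as the paper: both arguments reduce to the fact that a non-elementary hyperbolic group has $|C_{G,X}(n)|\asymp e^{n\grate}/n$ while Coornaert gives $|\B_{G,X}(n)|\asymp e^{n\grate}$, so the ratio is $O(1/n)$. The only difference is that the paper cites the conjugacy growth bound directly as \cite[Thm.~1.2]{AC17}, whereas you sketch its proof (cyclic permutations of conjugacy geodesics, proper powers, torsion classes) on top of the Coornaert--Knieper/Antol\'in--Ciobanu geometric input.
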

\begin{proof}
Let $G$ be a non-elementary hyperbolic group with finite generating set $X$. Then by a result of Coornaert (see \cite{c93}) there are positive constants $A_0$,$B_0$, and integer $n_0$, such that for all $n\geq n_0$
\begin{equation}\label{eq:growthbounds}
A_0e^{n \grate}\leq |\B_{G,X}(n)| \leq B_0e^{n\grate},
\end{equation}
where $\grate=Exp_X(G)$. 

By Theorem 1.2 in \cite{AC17}, there are positive constants $A_1, B_1$ and $n_1$ such that 
\begin{equation}\label{eq:conjgrowthbounds}
A_1\frac{e^{n\grate }}{n} \leq  |C_{G,X}(n)| \leq B_1 \frac{e^{n\grate}}{n} 
\end{equation}
for all $n \geq n_1$. Thus from (\ref{eq:growthbounds}) and (\ref{eq:conjgrowthbounds}) we get $$\frac{|C_{G,X}(n)|}{|\B_{G,X}(n)|}\le \frac{B_1}{A_0 n}$$ for all $n\geq \max(n_0, n_1)$, and by taking the limit we obtain that $\CR_X(G)=0$.
\end{proof}

\subsection{The lamplighter group} 

We follow the notation in \cite{VM17}. Let $I$ be a non-empty set. For $\eta\in \bigoplus_{i\in  I}G$ we write $\eta(i)$ for the $i^\textup{th}$ component of $\eta$, and if moreover $I$ is a group and $x\in I$, we define $\eta^x\in\bigoplus_{i\in  I}G$ by $\eta^x(i)=\eta(x^{-1}i)$, and say that $\eta^x$ is the {\em left translate} of $\eta$ by $x$.
\begin{defn}
Consider groups $H$ and $L$ with symmetric generating sets $A$ and $B$, and neutral elements $e$ and $e'$, respectively. The {\em wreath product of }$G$ {\em by }$L$, written $G\wr L$, is defined as
\[
H\wr L:=\bigoplus_{i\in L} H \rtimes L,
\]
where for $(\eta,m),(\theta,n)\in H\wr L$, $(\eta,m)(\theta,n)=(\eta\theta^m,mn)$.

\end{defn}
 For $h\in H$, let $\vec{h}\in \bigoplus_{i\in L}H$ be such that $\vec{h}(e')=h$ and $\vec{h}(i)=e$ for $i\neq e'$. Then
\begin{equation}\label{vecY}
 X:=\{(\vec{e},a)\,:\,a\in A\}\cup\{(\vec{b},e')\,:\,b\in B\}.
\end{equation}
 generates $H\wr L$.

For the lamplighter group $G=C_2 \wr \mathbb{Z}$ we let $A:=\{a\}$, where $a$ is the non-trivial element of $C_2$, and let $B$ be the standard generating set of $\mathbb{Z}$.

\begin{thm}\label{thm:lamplighter}
Let $G$ be the lamplighter group, that is, the wreath product $C_2\wr\mathbb{Z}$. Then $\CR_X(G)= 0$ for the standard generating set $X$.
\end{thm}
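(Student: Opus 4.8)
The plan is to compute explicit asymptotics for both $|\B_{G,X}(n)|$ (the standard growth) and $|C_{G,X}(n)|$ (the conjugacy growth) of $G = C_2 \wr \mathbb{Z}$ with respect to the standard generating set, and show the former grows strictly faster. For the standard growth, elements of $G$ are pairs $(\eta, m)$ with $\eta \in \bigoplus_{\mathbb{Z}} C_2$ finitely supported and $m \in \mathbb{Z}$; the word length with respect to $X$ is governed by the classical ``travelling salesman'' formula — one must walk along $\mathbb{Z}$ visiting every site in $\supp(\eta)$, pay one extra generator for each lamp toggled, and end at position $m$. This gives $|(\eta,m)|_X$ as the lamp count $|\supp(\eta)|$ plus the length of a shortest walk from $0$ to $m$ covering $\supp(\eta)$. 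Summing over all such configurations yields $|\B_{G,X}(n)| \sim c\,\lambda^n$ for an explicit algebraic $\lambda = Exp_X(G) > 1$ (the dominant root of the generating function for these walks), with at most a polynomial correction factor.

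For the conjugacy growth, I would use the description of conjugacy classes in wreath products $H \wr \mathbb{Z}$ (as in \cite{VM17}): a conjugacy class of $(\eta, m)$ with $m \neq 0$ is determined by $m$ together with the orbit of $\eta$ under the shift action, essentially by the ``necklace'' obtained from $\eta$ modulo cyclic rotation with period related to $m$; and for $m = 0$ one gets the class of $(\eta, 0)$ determined by $\eta$ up to translation. The key gain is that conjugation by $(\vec{e}, k)$ translates $\eta$, so a conjugacy class with $m \neq 0$ and ``necklace length'' $\ell$ has roughly $\ell$ or $|m|$ group elements of each given minimal length in it, killing a linear factor — exactly as in the hyperbolic case (\ref{eq:conjgrowthbounds}). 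More carefully: since translating $\eta$ does not change $|\supp(\eta)|$ but changes the walk-cost, among the at-most-$n$ translates of a given configuration realizing a conjugacy class, one expects $\Theta(n)$ group elements in each ball, so $|C_{G,X}(n)| \preceq \frac{1}{n}|\B_{G,X}(n)|$ up to constants, and then $\CR_X(G) = 0$ follows immediately as in Theorem~\ref{thm:hyperbolic}. By Remark~\ref{SC} it would suffice to do this on spheres.

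The main obstacle is making the conjugacy-class bookkeeping precise: one must show that the number of conjugacy classes meeting $\B(n)$ is genuinely smaller by a factor growing like $n$ (or at least unboundedly), rather than merely by a constant. This requires care because (a) the $m=0$ part (conjugacy classes inside the base group $\bigoplus C_2$, counted up to $\mathbb{Z}$-translation) must be handled separately and shown not to dominate — here the subgroup $\bigoplus C_2$ has exponential growth rate strictly less than $\lambda$, since forcing $m=0$ removes the ``free'' rightward drift, so this part is negligible even before taking the translation quotient; and (b) for $m \neq 0$ one needs a uniform lower bound on the orbit size (number of distinct lengths, or multiplicity of translates) for a generic configuration, which follows from the observation that a ``typical'' $\eta$ of support size $s$ has trivial or small shift-stabilizer and its translates spread their walk-costs across a range of length $\Theta(n)$. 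Once the counting is set up, the comparison $|C_{G,X}(n)| = o(|\B_{G,X}(n)|)$ — indeed $O(|\B_{G,X}(n)|/n)$ — is the same soft argument used for hyperbolic groups, so no delicate limit manipulation is needed beyond what Theorem~\ref{thm:hyperbolic} already illustrates.
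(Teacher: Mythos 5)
Your plan follows essentially the same route as the paper: compare the spherical conjugacy growth with the spherical growth and conclude via Stolz--Ces\`aro. The difference is that the paper does not derive either asymptotic itself --- it simply quotes \cite{VM17} (Example 5.0.3) for $|C^s_{G,X}(n)|\sim \frac{2}{n}\left(\frac{1+\sqrt{5}}{2}\right)^{n}$ and \cite{Par92} for $|S_{G,X}(n)|\sim \left(\frac{1+\sqrt{5}}{2}\right)^{n}$, divides, and invokes the remark that a vanishing spherical conjugacy ratio forces a vanishing conjugacy ratio. What you flag as ``the main obstacle'' (making the necklace/orbit bookkeeping precise enough to extract a genuine factor of $n$, including the $m=0$ classes and configurations with nontrivial shift-stabiliser) is precisely the content of Mercier's computation, so your proposal amounts to reproving the cited input. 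Your structural observations are correct and are the right ingredients --- the travelling-salesman word-length formula, the negligibility of the $m=0$ part because its exponential growth rate is strictly below $(1+\sqrt{5})/2$, and the fact that conjugation by $(\vec{e},k)$ translates the lamp configuration --- but as written the key quantitative step $|C^s_{G,X}(n)|=O\bigl(|S_{G,X}(n)|/n\bigr)$ is asserted rather than proved. If you carry that step out in detail, or simply cite \cite{VM17} for it as the paper does, the argument is complete.
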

\begin{proof}
The statement follows immediately from \cite[Example 5.0.3]{VM17}, where it is shown that $|C^s_{G,X}(n)|\sim \frac{2}{n}{\left(\frac{1+\sqrt{5}}{2}\right)}^{n}$, and the fact that $|S_{G,X}(n)|\sim {\left(\frac{1+\sqrt{5}}{2}\right)}^{n}$ by  \cite{Par92}.
\end{proof}

\subsection{Right-Angled Artin Groups}

Let $\Gamma=(V,E)$ be a simple graph (i.e. a non-oriented graph without loops or multiple edges) with vertex set $V$ and edge set $E$.
 For each vertex $v$ of $\Gamma$, let
$G_v$ be a group. The {\em graph product of the groups} $G_v$ {\em with respect to}
$\Gamma$ is defined to be the quotient of their free product by the normal
closure of the relators $[g_v,g_w]$ for all $g_v \in G_v$, $g_w \in G_w$
for which $\{v,w\}$ is an edge of $\Gamma$. Here we consider right-angled Artin groups (RAAGs), which are graph products with all $G_v = \mathbb{Z}$, and denote by $(G_V, X_V)$ the RAAG based on the graph $\Gamma$ with generating set $X_V$ (in bijection to $V$).

Conjugacy representatives in a RAAG come, to a large extent, from taking one word out of each cyclic permutation class, so we first establish the asymptotics of the language of cyclic representatives in a rather general setting. 

\begin{exmp} In a free group on the free generating basis, counting the conjugacy classes with a minimal representative of length $n$ is equivalent to counting the number of cyclically reduced words of length $n$, up to cyclic permutation.
\end{exmp}

\subsubsection{Cyclic representatives of languages}
We follow the notation in \cite[Section 2.3]{CHM17}.
Let $L$ be a language over a finite alphabet $X$, that is, $L\subseteq X^*$, and let $L(n)$ denote the set of words of length $\leq n$ in $L$.
For $n \geq 1, n\in \mathbb{N}$, let $L^n:= \{w^n \mid w\in L\}$ and $\sqrt[n]{L}=\{v \mid v^n \in L\}$. Define $\prim(L):=\{w \in L \mid \nexists k>1, v \in L \textrm{ such that } v^k=w \}$ to be the language of \emph{primitive words} in $L$.  

Suppose $L$ is closed under cyclic permutations; then we construct a language $\cycrep(L)$ of cyclic representatives of $L$ out of the words $w_c$, where $w_c$ the word that is least lexicographically among all cyclic permutations of $w$, for $w \in L$:
 $$\cycrep(L):=\{w_c \mid w \in L\}.$$ 

\begin{prop}[see also Lemma 2.10 (4), \cite{CHM17}]\label{decycle}
Let $L$ be an exponential growth language closed under cyclic permutations. Furthermore assume that $L^k \subseteq L$ and $\sqrt[k]{L} \subseteq L$ for all $k \geq 1$.
Then $$\lim_{n \rightarrow \infty} \frac{|\cycrep(L)(n)|}{|L(n)|}=0.$$ 
\end{prop}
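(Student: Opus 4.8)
The plan is to reduce the statement to a counting problem about cyclic equivalence classes and then to bound the number of such classes by splitting according to their size. Since $L$ is closed under cyclic permutations, for $w,w'\in L$ we have $w_c=w'_c$ precisely when $w$ and $w'$ are cyclic permutations of one another, and every cyclic permutation of a word in $L(n)$ again lies in $L(n)$. Hence $L(n)$ is a disjoint union of complete cyclic classes, $\cycrep(L)$ selects exactly one word from each such class, and $|\cycrep(L)(n)|$ equals the number of cyclic classes represented in $L(n)$.

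Recall next that every nonempty word $w$ is a power $w=v^k$ of a unique primitive word $v$, its \emph{primitive root}, and that the cyclic class of $w$ then has exactly $|v|$ elements; call this number the period $p(\mathcal C)$ of the class $\mathcal C=[w]$. Fix a threshold $T\ge 1$ and split the cyclic classes represented in $L(n)$ into those with $p(\mathcal C)\ge T$ and those with $p(\mathcal C)<T$. The classes of the first kind are pairwise disjoint subsets of $L(n)$, each of size at least $T$, so there are at most $|L(n)|/T$ of them. For a class of the second kind, its primitive root is a word of length less than $T$ over $X$, of which there are at most $|X|^T$; and for each fixed primitive word $v$ the classes $[v],[v^2],[v^3],\dots$ of length at most $n$ number at most $n$. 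Hence there are at most $n|X|^T$ classes of the second kind, and altogether
\[
\frac{|\cycrep(L)(n)|}{|L(n)|}\ \le\ \frac1T+\frac{n\,|X|^T}{|L(n)|}.
\]

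To conclude, given $\epsilon>0$ I would choose $T>2/\epsilon$ and then, using that $L$ has exponential growth, choose $n$ large enough that $n\,|X|^T/|L(n)|<\epsilon/2$; this forces $\limsup_{n}|\cycrep(L)(n)|/|L(n)|\le\epsilon$, and since $\epsilon$ is arbitrary the limit is $0$. The step I expect to need the most care is this last one: one must know that $|L(n)|$ eventually outgrows every polynomial in $n$ (equivalently, $|L(n)|/n\to\infty$), which is exactly where the exponential-growth hypothesis enters — it is immediate once $|L(n)|\ge\lambda^{n}$ for some $\lambda>1$ and all large $n$, the situation of interest. I would also remark that the closure hypotheses $L^k\subseteq L$ and $\sqrt[k]{L}\subseteq L$ do not appear to be needed for this particular argument; presumably they are recorded because they are what guarantees that $\cycrep(L)$ is the correct set of conjugacy representatives in the intended application to graph products.
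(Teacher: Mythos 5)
Your proof is correct, and it takes a genuinely different and more elementary route than the paper's. The paper works with spherical counts $a(n)=|L^s(n)|$, decomposes $L=\bigcup_{k\ge 1}\prim^k(L)$, and uses M\"obius inversion together with $\sum_{d\mid n}\phi(d)=n$ to get $c(n)=\sum_{d\mid n}a(d)\phi(n/d)/n$, whence $c(n)\sim a(n)/n$ on spheres; it then passes to balls via Stolz--Ces\`aro. That is precisely where the hypotheses $L^k\subseteq L$ and $\sqrt[k]{L}\subseteq L$ enter: they make the words of length $n$ in $L$ correspond bijectively to pairs consisting of a primitive word of $L$ of length $d\mid n$ and the exponent $n/d$, so that $a(n)=\sum_{d\mid n}p(d)$. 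Your argument instead double-counts directly in the ball $L(n)$, splitting cyclic classes by the length of the primitive root, and needs only closure under cyclic permutation together with $|L(n)|/n\to\infty$; so you are right that the two power-closure hypotheses are not used in your proof (they are needed for the paper's computation, not merely for the later application to RAAGs). The trade-off is precision: the paper's approach yields the asymptotic $c(n)\sim a(n)/n$, of independent interest in the conjugacy growth literature, while yours yields only the limit $0$ --- which is all the proposition asserts. Two cosmetic points: the number of primitive words of length less than $T$ is at most $\sum_{i<T}|X|^i$, so a bound such as $T|X|^T$ is safer than $|X|^T$ for arbitrary $|X|$ (immaterial here, since a one-letter alphabet cannot give exponential growth); and it is worth stating explicitly that a cyclic class meeting $L(n)$ is wholly contained in $L(n)$ because cyclic permutation preserves length, which is what makes the ``size at least $T$'' count legitimate.
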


\begin{proof}
For simplicity of notation let $a(n):=|L^s(n)|$, $p(n):=|\prim(L)^s(n)|$ and $c(n):=|\cycrep(L)^s(n)|$, that is, we consider the numbers of words of length exactly $n$ in each language. 

Write $L$ as $L=\bigcup_{k\geq 1}\prim^k(L),$ and notice that the number of cyclic representatives of length $n$ in $\prim(L)$ is $p(n)/n$, and the number of cyclic representatives of length $nk$ in $\prim^k(L)$ is also $p(n)/n$. Thus $a(n)=\sum_{d/n} p(d)$ and $c(n)=\sum_{d/n} \frac{p(d)}{d}$. Let $\mu(n)$ and $\phi(n)$ be the standard number theoretic M\"obius and Euler functions.
Then by M\"obius inversion $p(n)=\sum_{d/n} \mu(\frac{n}{d})a(n)$ and so $$c(n)=\sum_{d/n} \frac{\sum_{l/(n/d)} \frac{\mu(l)}{l}a(d)}{d}=\sum_{d/n} a(d) \frac{\phi(n/d)}{n},$$ which follows from $\sum_{d/n}\phi(d)=n$ and $\sum_{d/n}\frac{\mu(d)}{d}=\frac{\phi(n)}{n}$.

Since $a(n)$ is exponential, only the last term in the sum above is of the same magnitude as $a(n)$, so
\begin{equation}\label{sphericalaprox}
c(n) \sim \frac{a(n)}{n} \implies \lim_{n \rightarrow \infty} \frac{|\cycrep(L)^s(n)|}{|L^s(n)|}=0.
\end{equation}
By Stolz-Ces\`aro we obtain the result.
\end{proof}

%

\subsubsection{Conjugacy representatives in RAAGs}
We first establish a result about the conjugacy ratio of direct products.
\begin{lem}\label{directprod}
Let $H$ and $K$ be two groups with finite generating sets $X$ and $Y$, respectively. If either (i) $\CR_X(H)=\CR_Y(K)=0$ or, (ii) $\CR_X(H)=0$ and $Exp_X(H) > Exp_Y(K)$, then $\CR_{X\cup Y}(H\times K)=0$.
\end{lem}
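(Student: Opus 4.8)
The plan is to reduce the statement to the two convolution propositions (Proposition~\ref{convolution} and Proposition~\ref{convolution2}) by expressing the ball and conjugacy-counting functions of $H\times K$ as convolutions of those of $H$ and $K$. Using the generating set $X\cup Y$, an element $(h,k)$ has length $|h|_X+|k|_Y$, so
$$
|\B_{H\times K, X\cup Y}(n)| = \sum_{i=0}^n |S_{H,X}(i)|\,|\B_{K,Y}(n-i)|,
$$
and, since conjugacy classes in a direct product are products of conjugacy classes,
$$
|C_{H\times K, X\cup Y}(n)| \le \sum_{i=0}^n |C^s_{H,X}(i)|\,|C_{K,Y}(n-i)|,
$$
with the inequality coming from the fact that a product conjugacy class may have a shortest representative whose $H$- and $K$-coordinates are not individually shortest in their classes (one overcounts, as in the proof of Proposition~\ref{subexpprop}; a bounded shift in the indices is harmless for the limit). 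Writing $b_n:=|\B_{H,X}(n)|$, $d_n:=|\B_{K,Y}(n)|$ and $a_n:=|C_{H,X}(n)|$, $c_n:=|C_{K,Y}(n)|$, and using that spherical counts are differences of ball counts, the target ratio becomes exactly a ratio of convolutions of the type handled in those propositions.

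In case (i), both $\CR_X(H)=0$ and $\CR_Y(K)=0$, so $a_n/b_n\to 0$ and $c_n/d_n\to 0$; the hypotheses $a_n\le b_n$ and $\widehat c_n\le\widehat d_n$ hold because there are never more conjugacy classes than elements (at each length), and the sequences are monotone. Thus Proposition~\ref{convolution} applies directly (after absorbing the bounded index-shift coming from the conjugator length into the estimate, exactly as in Proposition~\ref{subexpprop}), giving $\CR_{X\cup Y}(H\times K)=0$. In case (ii), we instead invoke Proposition~\ref{convolution2}: we need $a_n/b_n\to 0$ (which is $\CR_X(H)=0$), $c_n\le d_n$, monotonicity, and the decay condition $d_n/b_n\le\delta^n$ for some $\delta<1$. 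This last point is where the strict inequality $Exp_X(H) > Exp_Y(K)$ is used: if $\lambda:=Exp_X(H)$ and $\mu:=Exp_Y(K)$ with $\mu<\lambda$, then choosing $\epsilon$ small enough that $\mu+\epsilon<\lambda-\epsilon$, the growth estimates from the Preliminaries give $d_n\le(\mu+\epsilon)^n$ and $b_n\ge(\lambda-\epsilon)^n$ for large $n$, hence $d_n/b_n\le\big((\mu+\epsilon)/(\lambda-\epsilon)\big)^n=:\delta^n$ with $\delta<1$.

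The main obstacle — really the only nonroutine point — is justifying the convolution identity for the conjugacy count and controlling the discrepancy between "shortest representative of the product class" and "product of shortest representatives of the factor classes." The clean way to handle this is to observe that if $(h,k)$ is conjugate in $H\times K$ to $(h',k')$, then $h\sim h'$ in $H$ and $k\sim k'$ in $K$; picking shortest class representatives coordinatewise gives at most one product-class representative of length $\le n$ per pair (shortest in $H$, shortest in $K$) with total length $\le n$, so $|C_{H\times K}(n)|\le\sum_{i} |C^s_{H,X}(i)|\,|C^s_{K,Y}(n-i)|$ summed into ball form, which is an honest upper bound with no shift at all — even simpler than Proposition~\ref{subexpprop}. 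One then lets $n\to\infty$ and applies the relevant convolution proposition. A final remark: the lower bound $\CR_{X\cup Y}(H\times K)\ge 0$ is trivial, so only the upper bound is needed, which is what the convolution propositions deliver.
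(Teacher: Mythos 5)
Your proposal is correct and follows essentially the same route as the paper: express the ball of $H\times K$ and the conjugacy count as convolutions of ball/sphere data of the factors (using that conjugacy in a direct product is coordinatewise, so minimal class-representative lengths add), then apply Proposition~\ref{convolution} in case (i) and Proposition~\ref{convolution2} in case (ii), with the growth-rate gap supplying hypothesis (iv). Your closing observation that the coordinatewise count needs no index shift is exactly what makes the paper's convolution identity exact, so the earlier worry about a shift is moot.
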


\begin{proof}
	
We calculate the conjugacy ratio with respect to balls in $H \times K$.	
To do this we use balls in $H$ and spheres in $K$. 
Let $a_n:=|C_{H,X}(n)|$, $b_n:=|\B_{H,X}(n)|$, $t_n:=|C^s_{K,Y}(n)|$ and $s_n:=|S_{K,Y}(n)|$. Then $$\CR_{X\cup Y}(H\times K)=\limsup_{n\rightarrow \infty}\frac{\sum_{i=0}^n a_i t_{n-i}}{\sum_{i=0}^n b_i s_{n-i}}.$$

If $\CR_X(H)= \CR_Y(K)=0$, then by Proposition \ref{convolution} (putting $t_n=\widehat{c_n}, s_n=\widehat{d_n}$) we get that $\CR_{X\cup Y}(H\times K)=0$. Similarly, if $\CR_X(H)=0$ and $Exp_X(H) > Exp_Y(K)$ then Proposition \ref{convolution2} (putting $c_n= t_n, d_n=s_n$) states that this limit is zero, so $\CR_{X\cup Y}(H\times K)=0$.
\end{proof}
 
Since RAAGs interpolate between free and free abelian groups, the presence of commutativity does not allow us to simply consider cyclically reduced words up to permutation, as in free groups. We need to single out the words for which taking cyclic representatives produces conjugacy representatives, and use Crisp, Godelle and Wiest's approach from \cite{CGW}, which was further developed in \cite{CHM17}. 

\begin{defn}[Def 2.19, \cite{CGW}]
Let $V=\{a_1, \dots, a_N\}$ and set the total order $a_1<a_1^{-1}<a_2<a_2^{-1}< \dots$. 
 A cyclically reduced word $w$ is in \emph{cyclic normal form} if it is in the shortlex language $\sphl(G_V,X_V)$ of $G_V$ with respect to $X_V$ and all its cyclic conjugates are in $\sphl(G_V,X_V)$ as well. 
 \end{defn}

Not all elements posses a cyclic normal form. For example, if $[a_1,a_2]=1$, the word $a_1a_2$ is in $\sphl(G_V,X_V)$, but its cyclic permutation $a_2a_1$ is not. To deal with this situation, \cite{CGW} divides the words over $X_V$ into \emph{split} and \emph{non-split}.
 
\begin{defn}[Definition 2.13, \cite{CGW}]
Let $w$ be a cyclically reduced word over $X_V$ and denote by $\Delta(w)$ the full subgraph spanned by $\supp(w)$. Let $\overline{\Delta(w)}$ be the graph complement of $\Delta(w)$.
\begin{enumerate}
\item[(i)] The word $w$ is \emph{split} if $\overline{\Delta(w)}$ is disconnected, which amounts to being able to write $w$ as a product of commuting subwords (or blocks).

\item[(ii)] The word $w$ is \emph{non-split} if $\overline{\Delta(w)}$ is connected. 

\item[(iii)] Let $\cycsl(G_V,X_V)$ denote the set of all non-trivial cyclic normal forms corresponding to non-split words in $G_V$. 
\end{enumerate}
\end{defn} 

We say that a group element is \emph{non-split (split)} if it can be represented by a cyclically reduced word which is non-split (split). 

 \begin{prop}[Prop. 2.21, \cite{CGW}]\label{CNFnonsplit}
Two cyclic normal forms represent conjugate elements if and only if they are equal up to a cyclic permutation. 
 \end{prop}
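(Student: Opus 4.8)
\emph{Plan.} One implication is immediate and carries no information about normal forms: if $w=x_1x_2\cdots x_n$ is a word over $X_V$ and $w'=x_{k+1}\cdots x_n x_1\cdots x_k$ is a cyclic permutation of it, then, setting $u=x_1\cdots x_k$, we have $w'=u^{-1}wu$ in $G_V$, so $w$ and $w'$ represent conjugate elements. So the whole content is in the converse, and the strategy is to combine the cyclic reduction theory of RAAGs with the canonicity built into the definition of cyclic normal form.

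First I would recall the cyclic reduction machinery for RAAGs, as developed in \cite[Section 2]{CGW} (see also the graph product treatment in \cite{CHM17}): every element of $G_V$ has a cyclically reduced representative, any two cyclically reduced representatives of conjugate elements have equal length, and, crucially, \emph{any two cyclically reduced words representing conjugate elements differ by a finite sequence of elementary moves}, each move being either a commutation swap $\cdots xy\cdots \mapsto \cdots yx\cdots$ with $\{x,y\}$ an edge of $\Gamma$ (which leaves the group element unchanged) or a single-letter cyclic shift $xu\mapsto ux$. Since a cyclic normal form is cyclically reduced by definition, if $w,w'\in\sphl(G_V,X_V)$ are cyclic normal forms representing conjugate elements then $w'$ is reached from $w$ by such a sequence; the task is to show this sequence may be replaced by a sequence of purely word-level cyclic permutations, which would exhibit $w'$ as a cyclic permutation of $w$.

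The mechanism for this is to pass to the cyclic invariant. Represent a cyclically reduced word as a \emph{necklace}: its letters, considered up to the commutation relations, arranged in a circle (equivalently, a cyclic heap over $\Gamma$). The conjugacy criterion above says exactly that $w$ and $w'$ determine the same necklace up to rotation, and a cyclic normal form is then a canonical \emph{linearization} of that necklace: one cuts the circle and reads off the shortlex-least compatible word. The defining clause ``all cyclic conjugates of $w$ lie in $\sphl(G_V,X_V)$'' is precisely what forces the only admissible cut points to be ones producing a shortlex word under every rotation, which rules out ambiguities of the type $a_1a_2\leftrightarrow a_2a_1$ (neither is a cyclic normal form). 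Thus $w$ and $w'$ are shortlex linearizations of a common rotation class of necklace, and I would conclude that they are cyclic permutations of each other as words. To make ``same rotation class of necklace $\Rightarrow$ cyclic permutation of cyclic normal forms'' rigorous I would induct on $n=|w|$: compare the first letters $x$ of $w$ and $x'$ of $w'$; a suitable rotation of $w'$ (legitimate, since all cyclic conjugates of a cyclic normal form remain in $\sphl(G_V,X_V)$) aligns an occurrence of $x$ to the front, shortlex-minimality then forces $x=x'$, and deleting the common leading letter yields a strictly shorter pair of cyclic normal forms of conjugate elements.

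\emph{Main obstacle.} The delicate step is the last one: rearranging the commutation-and-shift sequence furnished by the RAAG conjugacy criterion into pure word-level cyclic permutations between the two \emph{cyclic} normal forms. This is exactly where the strong hypothesis in the definition of cyclic normal form---that \emph{every} cyclic conjugate, not merely $w$ itself, is shortlex---is indispensable, and where the interaction between the confluent shortlex rewriting system of $G_V$ and the operation of cyclic permutation must be controlled (for instance, ruling out that a commutation swap is genuinely needed to reach $w'$ by noting that it would force a non-shortlex cyclic conjugate). Organizing the argument as the induction on $|w|$ sketched above is, I expect, the cleanest way to keep this interaction in check, since each step reduces to a strictly smaller instance of the same statement.
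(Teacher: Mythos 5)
This proposition is quoted verbatim from \cite[Prop.\ 2.21]{CGW}; the paper offers no proof of its own, so there is nothing in-paper to match your argument against, and I am judging the proposal on its own terms. Your easy direction is fine, and your overall framing of the converse --- conjugate cyclically reduced words determine the same cyclic heap (``necklace'') up to rotation, and a cyclic normal form is a canonical linearization of that heap --- is the right one and is essentially the viewpoint of \cite{CGW}. The problem is the concrete mechanism you propose for closing the argument. Your induction deletes the common leading letter and claims to obtain ``a strictly shorter pair of cyclic normal forms of conjugate elements.'' That step is false: if $xu$ and $xu'$ represent conjugate elements, $u$ and $u'$ need not. Already in a free group (where every cyclically reduced word is a cyclic normal form, since every reduced word is shortlex) take the necklace $x\,a\,x\,b$ with $a,b,x$ distinct generators: its two linearizations beginning with $x$ are $w=xaxb$ and $w'=xbxa$, which are conjugate cyclic normal forms with the same first letter, yet $axb$ and $bxa$ are \emph{not} conjugate. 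So the recursion never reaches a smaller instance of the same statement (and, separately, a suffix of a cyclic normal form need not be a cyclic normal form, only a shortlex word). Note that $w$ and $w'$ here \emph{are} cyclic permutations of each other, so the proposition survives --- it is only your reduction that breaks.

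The repair is to align \emph{letter occurrences} (cut points of the necklace) rather than leading letters, and then to avoid induction altogether. Since $w$ and $w'$ define the same cyclic heap, the first letter of $w'$ corresponds to some occurrence in $w$; let $w''$ be the word-level cyclic permutation of $w$ cut at that occurrence. Every cyclic permutation of a cyclic normal form is again a cyclic normal form, so $w''\in\sphl(G_V,X_V)$; and $w''$ and $w'$ are linearizations of the same cyclic heap at the same cut point, hence represent the \emph{same} element of $G_V$, not merely conjugate ones. Uniqueness of shortlex normal forms then gives $w''=w'$ in one step. What still needs care --- and is the genuine content you would have to import or prove --- is the conjugacy criterion itself (that conjugate cyclically reduced words differ by commutations and cyclic shifts, i.e.\ define the same cyclic heap up to rotation) together with the fact that cutting a cyclic heap at a fixed occurrence yields a well-defined linear heap; for split words this requires the block analysis of \cite[Remark 2.14]{CGW}. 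As written, your proposal gestures at these facts but the inductive scaffolding you hang on them does not hold weight.
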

 

\begin{prop}[Remark 2.14, \cite{CGW}]\label{splitconj}
Let $w$ and $v$ be two cyclically reduced split words. Then they are conjugate if and only if $\Delta(w)=\Delta(v)$ and the words corresponding to the commuting blocks are conjugate, respectively.
\end{prop}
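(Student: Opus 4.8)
The plan is to reduce the statement to two structural facts about the parabolic subgroups $G_S=\langle X_S\rangle\le G_V$ for $S\subseteq V$. First, for every $S$ the assignment fixing the generators in $S$ and killing those outside $S$ defines a retraction $\rho\colon G_V\to G_S$. Second, if the full subgraph on $S$ is a join --- equivalently $\overline{\Gamma[S]}$ is disconnected with connected components $C_1,\dots,C_k$ --- then $G_S=G_{C_1}\times\cdots\times G_{C_k}$; for a cyclically reduced split word $w$ with $\supp(w)=S$, the factors of $w$ under this decomposition are precisely its commuting blocks $w_1,\dots,w_k$, with $w_j=\rho_{C_j}(w)\in G_{C_j}$. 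I will also use the classical fact (Servatius) that a cyclically reduced element of a RAAG has minimal length in its conjugacy class, together with convexity of parabolic subgroups, so that lengths are measured consistently inside them.

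For the direction ($\Leftarrow$), I would assume $\Delta:=\Delta(w)=\Delta(v)$ with the common block structure $w=(w_1,\dots,w_k)$, $v=(v_1,\dots,v_k)$ in $G_\Delta=G_{C_1}\times\cdots\times G_{C_k}$, and each $w_j$ conjugate to $v_j$ in $G_V$. Applying the retraction $\rho_{C_j}$ to a conjugating relation and using that it is the identity on $G_{C_j}$ produces $v_j=h_j^{-1}w_jh_j$ with $h_j\in G_{C_j}$; then $h:=(h_1,\dots,h_k)\in G_\Delta$ conjugates $w$ to $v$ factor by factor, so $w$ and $v$ are conjugate in $G_V$.

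For ($\Rightarrow$), the first and essentially only delicate point is to show $\supp(w)=\supp(v)$, whence $\Delta(w)=\Delta(v)$. I would argue by contradiction: if some generator $a$ lies in $\supp(w)\setminus\supp(v)$, apply the retraction $\rho=\rho_{V\setminus\{a\}}$; then $\rho(v)=v$ and $\rho(w)$ is conjugate to $v$, hence to $w$, in $G_V$, but $\rho(w)$ is represented by the word obtained from a geodesic word for $w$ by deleting its (at least one) occurrences of $a^{\pm1}$, so its length is strictly less than $|w|$ --- contradicting minimality of the length of the cyclically reduced element $w$ in its conjugacy class. Having $\Delta(w)=\Delta(v)=:\Delta$, I would decompose both as above and apply $\rho_{C_j}$ to an equation $v=g^{-1}wg$; since $\rho_{C_j}$ kills the blocks $w_i,v_i$ for $i\neq j$, this yields $v_j=\rho_{C_j}(g)^{-1}w_j\rho_{C_j}(g)$, i.e.\ the corresponding blocks are conjugate, completing the argument.

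The main obstacle is exactly this support-invariance step in ($\Rightarrow$): unlike in the free case one cannot read the support off the abelianisation (e.g.\ a commutator is cyclically reduced but trivial in the abelianisation), so some genuine input about cyclically reduced words is needed --- here the minimal-length property, combined with retractions, supplies it. Everything else is formal once the direct-product decomposition $G_\Delta=\prod_j G_{C_j}$ and the retractions $\rho_S$ are in hand. (This is the content of Remark~2.14 in \cite{CGW}.)
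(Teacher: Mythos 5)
Your argument is correct. Note, however, that the paper does not prove this statement at all: it is imported verbatim as Remark~2.14 of \cite{CGW}, so there is no ``paper proof'' to compare against, and what you have written is a legitimate self-contained justification. The two ingredients you lean on are both sound and standard: the retractions $\rho_S\colon G_V\to G_S$ (killing generators outside $S$) respect all defining relators, and when $\overline{\Delta(w)}$ has components $C_1,\dots,C_k$ the parabolic $G_{\Delta(w)}$ really is the direct product $\prod_j G_{C_j}$ with $w_j=\rho_{C_j}(w)$ the blocks. Your ($\Leftarrow$) direction and the block-by-block conjugacy in ($\Rightarrow$) are then purely formal, as you say. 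The only genuine input is the support-invariance step, and your route via Servatius' minimal-length property of cyclically reduced elements plus the length-decreasing retraction $\rho_{V\setminus\{a\}}$ works (you correctly only need $|\rho(w)|$ to be bounded above by the length of the deleted word, not that the deleted word is geodesic). A slightly shorter route to the same step, closer in spirit to what \cite{CGW} actually use, is the other half of Servatius' lemma: any two cyclically reduced conjugates are related by a sequence of commutations and cyclic permutations, each of which preserves the multiset of letters and hence the support; this gives $\supp(w)=\supp(v)$ immediately and also explains why, in the free case, one does not need the abelianisation either. Your parenthetical contrast with the free case is therefore a little misleading but harmless.
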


\begin{lem}\cite{CHM17} \label{twoconditions}
Let $\cycsl(G_V,X_V)$ be the set of cyclic normal forms in $G_V$.
The following hold:

(1) $\cycsl^k(G_V,X_V)\subseteq \cycsl(G_V,X_V)$ for all $k\geq 1$, and 

(2) $\cycsl(G_V,X_V))$ is closed under cyclic permutations.

\end{lem}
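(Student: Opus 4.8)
The plan is to read off each of the two closure properties directly from the definition of $\cycsl(G_V,X_V)$ — namely, that a word lies in $\cycsl(G_V,X_V)$ iff it is non-trivial, non-split, and a cyclic normal form (cyclically reduced, in $\sphl(G_V,X_V)$, with all cyclic conjugates in $\sphl(G_V,X_V)$). Throughout I would use that a RAAG is torsion-free, that being cyclically reduced and the support $\supp(\cdot)$ are invariant under cyclic conjugation, and the standard fact that in a RAAG a cyclically reduced word $v$ satisfies $|v^k|_{X_V}=k|v|_{X_V}$ (no cancellation occurs on taking powers), which is already available from \cite{CGW}.

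For (2), let $w\in\cycsl(G_V,X_V)$ and let $w'$ be a cyclic conjugate of $w$. It is cyclically reduced since $w$ is. Every cyclic conjugate of $w'$ is again a cyclic conjugate of $w$, hence lies in $\sphl(G_V,X_V)$ because $w$ is a cyclic normal form; in particular $w'$ itself is shortlex, so $w'$ is a cyclic normal form. Since $\supp(w')=\supp(w)$ we get $\Delta(w')=\Delta(w)$ and so $\overline{\Delta(w')}=\overline{\Delta(w)}$ is connected, i.e. $w'$ is non-split; and $w'$ is non-trivial because $w$ is. Hence $w'\in\cycsl(G_V,X_V)$.

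For (1), fix $w\in\cycsl(G_V,X_V)$ and $k\ge 1$, and verify the defining conditions for $w^k$. It is non-trivial as $G_V$ is torsion-free, and non-split because $\supp(w^k)=\supp(w)$, so $\overline{\Delta(w^k)}=\overline{\Delta(w)}$ is connected. For cyclic reducedness, note that every cyclic conjugate of the word $w^k$ has the form $(uv)^k$, where $uv$ is the corresponding cyclic conjugate of $w=vu$; since $uv$ is a cyclic conjugate of the cyclically reduced word $w$ it is itself cyclically reduced, so $(uv)^k$ has length $k|uv|_{X_V}=k|w|_{X_V}$ and is therefore geodesic. Thus all cyclic conjugates of $w^k$ are geodesic, i.e. $w^k$ is cyclically reduced. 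The remaining point is that $w^k$, and each of its cyclic conjugates, is a shortlex normal form. Having shown $w^k$ is geodesic, I would establish lex-minimality using the heaps-of-pieces (equivalently, confluent sorting/length-reduction) description of $\sphl(G_V,X_V)$: the heap of $w^k$ is $k$ stacked copies of the heap of $w$, and every minimal piece of it lies in the first copy and is a minimal piece of the heap of $w$; since $w\in\sphl(G_V,X_V)$ is exactly the greedy reading of its heap (repeatedly output the least available minimal generator), deleting its first letter leaves the greedy reading of the remaining heap, and iterating this shows that the greedy reading of the heap of $w^k$ is $w^k$ itself, so $w^k\in\sphl(G_V,X_V)$. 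Applying the same argument to each cyclic conjugate $uv$ of $w$ — which lies in $\sphl(G_V,X_V)$ precisely because $w$ is a cyclic normal form — shows $(uv)^k\in\sphl(G_V,X_V)$, so all cyclic conjugates of $w^k$ are shortlex. Therefore $w^k\in\cycsl(G_V,X_V)$.

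I expect the heap bookkeeping in the last step to be the only real obstacle: one must argue carefully that the shortlex normal form agrees with the greedy heap reading, that this property passes to the sub-heap obtained by deleting the first letter, and that stacking $k$ copies of a heap creates no minimal pieces outside the first copy. A purely combinatorial alternative avoids heaps: if $w^k$ were not lexicographically least among its geodesic representatives, one could examine the earliest position at which a smaller geodesic representative disagrees with $w^k$ and, using only commutation and free-cancellation moves localized around that position, produce a geodesic representative of some cyclic conjugate of $w$ that is lexicographically smaller than it, contradicting $w\in\cycsl(G_V,X_V)$.
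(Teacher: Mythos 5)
The paper offers no proof of this lemma at all --- it is quoted from \cite{CHM17} --- so any argument you give is necessarily ``different from the paper's''. Your part (2) is correct and complete: closure under cyclic permutation really is immediate from the definition. Your treatment of part (1) is right in outline (non-triviality from torsion-freeness, non-splitness from preservation of support, cyclic reducedness from the identity that every cyclic conjugate of $w^k$ equals $(uv)^k$ for a cyclic conjugate $uv$ of $w$), but the shortlex step contains a genuine gap, and it sits exactly where you suspected it would.

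The gap is in the iteration ``deleting its first letter leaves the greedy reading of the remaining heap, and iterating this shows that the greedy reading of the heap of $w^k$ is $w^k$ itself''. It is true that every minimal piece of the \emph{initial} stacked heap lies in the first copy, but after you have removed part of the first copy this fails: pieces of the second copy can become minimal before the first copy is exhausted, and your sketch never rules out that the greedy algorithm then prefers one of them. Indeed, the implication ``$w\in\sphl$, cyclically reduced and non-split $\Rightarrow w^k\in\sphl$'', which is all your iteration actually invokes, is false. Take the RAAG on $a<b<c$ with the single relation $[a,c]=1$ and $w=abc$: this is geodesic, shortlex, cyclically reduced and non-split, yet $w^2=abcabc$ is not shortlex, since commuting the middle $c$ and $a$ gives the lexicographically smaller geodesic $abacbc$. (Of course $abc$ is not a cyclic normal form, because its cyclic conjugate $bca$ is not shortlex --- that is precisely the point.) So the hypothesis that \emph{all} cyclic conjugates of $w$ are shortlex must be used inside the iteration, not only at the end to handle the cyclic conjugates of $w^k$. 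The repair is concrete: suppose that after reading $x_1\cdots x_j$ from the first copy a piece of the second copy with label $x_{j'}$ ($j'\le j$) is minimal and satisfies $x_{j'}<x_{j+1}$. Minimality forces $x_{j'}$ to commute with $x_1,\dots,x_{j'-1}$ and with $x_{j+1},\dots,x_m$; hence in the cyclic conjugate $v=x_{j+1}\cdots x_m x_1\cdots x_j$ of $w$ the occurrence of $x_{j'}$ is a minimal piece of the heap of $v$, so the greedy (shortlex) reading of $v$ begins with a letter $\le x_{j'}<x_{j+1}$, contradicting $v\in\sphl$. With this inserted, your heap argument closes up; your ``purely combinatorial alternative'' at the end is pointing at the same mechanism but is too vague, as written, to count as a proof.
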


\begin{thm}\label{thm:RAAG}
Let $G=(G_V, X_V)$ be a right-angled Artin group (RAAG) based on a graph $\Gamma=(V,E)$ with generating set $X_V$. Then $\CR_{X_V}(G)= 0$ unless $G$ is free abelian, in which case $\CR_{X_V}(G)=1$.
\end{thm}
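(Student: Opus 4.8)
The plan is to split the RAAG $G=(G_V,X_V)$ into pieces according to the structure of the defining graph $\Gamma$ and induct on the number of vertices $|V|$. The base case $|V|=1$ gives $G\cong\Z$, which is free abelian, and indeed $\CR_{X_V}(\Z)=1$ (there every element is its own conjugacy class). For the inductive step I would distinguish two cases depending on whether $\Gamma$ is a join or not.

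First, suppose $\Gamma$ decomposes as a join of two non-empty subgraphs $\Gamma_1$ and $\Gamma_2$ (that is, every vertex of $\Gamma_1$ is adjacent to every vertex of $\Gamma_2$). Then $G=G_{V_1}\times G_{V_2}$ with the generating set $X_V=X_{V_1}\cup X_{V_2}$, and each $G_{V_i}$ is a RAAG on fewer vertices. If $G$ is free abelian then both factors are, and a direct computation (or Theorem~\ref{indx} together with Proposition~\ref{cr>0}) gives $\CR_{X_V}(G)=1$. If $G$ is not free abelian, then at least one factor, say $G_{V_1}$, is not free abelian, hence by induction $\CR_{X_{V_1}}(G_{V_1})=0$. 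Now I invoke Lemma~\ref{directprod}: if the other factor also has conjugacy ratio $0$, case (i) applies; if the other factor is free abelian of rank $r$, I need $Exp_{X_{V_1}}(G_{V_1})>Exp_{X_{V_2}}(G_{V_2})=1$, which holds since a non-free-abelian RAAG has exponential growth (it contains a free subgroup of rank $2$ by a standard argument, e.g. it contains $F_2$ unless it is free abelian). So case (ii) of Lemma~\ref{directprod} applies and $\CR_{X_V}(G)=0$.

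Second, suppose $\Gamma$ is \emph{not} a join, equivalently $\overline{\Gamma}$ is connected. In this case $G$ is not free abelian (a free abelian RAAG has complete defining graph, whose complement is totally disconnected, hence a join once $|V|\ge 2$), so I must show $\CR_{X_V}(G)=0$. Here I use the Crisp--Godelle--Wiest machinery recalled in the excerpt. Conjugacy classes of $G$ with a representative of length $\le n$ are accounted for by: (a) non-split cyclic normal forms up to cyclic permutation, and (b) split elements, handled recursively via Proposition~\ref{splitconj}. For the non-split part, the language $\cycsl(G_V,X_V)$ satisfies the hypotheses of Proposition~\ref{decycle} by Lemma~\ref{twoconditions} (closed under cyclic permutations, closed under powers; the $n$-th root condition and that it is an exponential-growth language need to be checked — exponential growth holds because the non-join hypothesis forces exponential growth of $G$ and non-split elements are generic among cyclically reduced words), so the number of conjugacy representatives coming from non-split elements is $o(|\B_{G,X_V}(n)|)$. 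For the split part: a split element is a product of commuting blocks supported on the connected components of $\overline{\Delta(w)}$, each of which spans a proper sub-RAAG (proper because $w$ is split, so $\overline{\Delta(w)}$ is disconnected and each component misses some vertex of $\supp(w)$, and $\supp(w)$ spans a parabolic subgroup which is itself a RAAG on fewer vertices); counting conjugacy classes of split elements thus reduces, via a convolution over the finitely many possible support graphs and their splittings, to the conjugacy-ratio estimates for strictly smaller RAAGs, which are $0$ by induction, combined with Proposition~\ref{convolution} or~\ref{convolution2} to control the convolutions against the dominant (exponential) growth of $\B_{G,X_V}(n)$. Adding up the finitely many contributions gives $\CR_{X_V}(G)=0$.

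The main obstacle I expect is the bookkeeping in the split case: one must carefully show that every conjugacy class of $G$ meeting $\B_{G,X_V}(n)$ is either counted among the non-split cyclic normal forms or decomposes (via Proposition~\ref{splitconj}) into conjugacy data of proper parabolic sub-RAAGs, and then assemble these counts as a finite sum of convolutions whose denominators grow like $|\B_{G,X_V}(n)|$ (exponentially, by the non-join hypothesis) while each numerator is $o$ of the corresponding sub-RAAG growth. Verifying that Proposition~\ref{decycle} genuinely applies — in particular the exponential-growth hypothesis and the $\sqrt[k]{L}\subseteq L$ closure for $L=\cycsl(G_V,X_V)$ — is the other delicate point, and may require the observation that in a non-join RAAG the generic cyclically reduced word is non-split, so $\cycsl$ inherits exponential growth from the ambient group. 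Modulo these points the argument is a clean induction on $|V|$ with the join/non-join dichotomy as the organizing principle.
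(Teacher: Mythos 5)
Your proposal is correct in outline and follows the same overall strategy as the paper: induction on $|V|$, the join/non-join dichotomy (join $=$ direct product, handled by Lemma~\ref{directprod} exactly as you describe, including the growth-rate comparison needed for case (ii)), and the Crisp--Godelle--Wiest cyclic normal forms together with Proposition~\ref{decycle} for the non-join case. The one place you diverge is in how the non-join case is organized, and the paper's choice is worth noting because it eliminates the bookkeeping you flag as your main obstacle. Instead of partitioning conjugacy classes into split versus non-split and then recursing over the commuting blocks of split elements via convolutions, the paper partitions by \emph{support}: classes with a minimal representative supported on a proper $X_U$, $U\subsetneq V$, and classes with full support. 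Since parabolic subgroups are convex and conjugacy of cyclically reduced words with support $X_U$ is detected inside $G_U$ (Propositions~\ref{CNFnonsplit} and~\ref{splitconj}), the proper-support classes are bounded by $\sum_{U\subsetneq V}|C_{G_U,X_U}(n)|$, and each summand is $o(|\B_{G_V,X_V}(n)|)$ either because $\CR_{X_U}(G_U)=0$ by induction or because $G_U$ is free abelian of polynomial growth while $G$ (not a join) has exponential growth --- no convolutions needed, since the inductive hypothesis already absorbs whatever direct-product structure $G_U$ has. The full-support classes are then automatically non-split (full support means $\Delta(w)=\Gamma$, and $\overline{\Gamma}$ is connected in the non-join case), so Proposition~\ref{decycle} applied to $\cycrep(\cycsl(G_V,X_V))$ finishes the argument. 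Your concern about verifying the hypotheses of Proposition~\ref{decycle} for $L=\cycsl(G_V,X_V)$ (exponential growth and $\sqrt[k]{L}\subseteq L$ in addition to the two conditions of Lemma~\ref{twoconditions}) is legitimate --- the paper also passes over this rather quickly --- but it is a verification, not a gap in the strategy.
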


\begin{proof}
We use induction on the number of vertices. Let $n:=|V|$. The result is trivial for $n=1$. If $G$ is a direct product, then we get $\CR(G)=0$ if at least one of the factors has $\CR=0$; this follows from Lemma \ref{directprod}(i) if both factors have $\CR=0$ and from Lemma \ref{directprod}(ii) if, say, the first factor has $\CR=0$, as the second is by induction free abelian, and of strictly smaller growth rate than the first. We get $\CR(G)=1$ when each factor is free abelian. 

So suppose $G$ is not a direct product. We split the conjugacy classes $C_{G_V,X_V}$ of $G$ into two types: those which have a shortest length representative with support $X_U$, where $U \subsetneq V$, and denote these by $C_{G_V,\leq X_V}$, and those which have a shortest length representative with support exactly $X_V$, and denote these by $C_{G_V,=X_V}$. By Propositions \ref{CNFnonsplit} and \ref{splitconj}, this is well defined. Moreover, by Propositions \ref{CNFnonsplit} and \ref{splitconj}, two cyclically reduced words $w_1$, $w_2$ with support $X_U$ are conjugate in $G_V$ if and only if they are conjugate in $G_U$ (note that if a word $w\in \cycsl(G_V,X_V)\cap X^*_{U}$, where $U\subsetneq V$, then $w\in \cycsl(G_U,X_U)$). 

Thus we can write $C_{G_V,\leq X_V} \subseteq \bigcup_{U \subsetneq V} C_{G_U,X_U}$ and express the above as:
\begin{equation}\label{decomposition}
C_{G_V,X_V} \subseteq \bigcup_{U \subsetneq V}C_{G_U,X_U}\bigcup C_{G_V,=X_V}.
\end{equation}
Then (\ref{decomposition}) implies that 
\begin{equation}\label{frac}
\frac{|C_{G_V,X_V}(n)|}{|\B_{G_V,X_V}(n)|} \leq \frac{\left(\sum_{X_U, U \subsetneq V} |C_{G_U,X_U}(n)|\right) + |C_{G_V,=X_V}(n)|}{|\B_{G_V,X_V}(n)|}.
\end{equation}
Now for $U \subsetneq V$ $$\frac{|C_{G_U,X_U}(n)|}{|\B_{G_V, X_V}(n)|}=\frac{|C_{G_U, X_U}(n)|}{|\B_{G_U, X_U}(n)|}\frac{|\B_{G_U, X_U}(n)|}{|\B_{G_VX_V}(n)|}, $$
so $$\limsup_{n\rightarrow \infty}\frac{|C_{G_U,X_U}(n)|}{|\B_{G_V,X_V}(n)|} \leq \CR_{X_U}(G_U)\limsup_{n\rightarrow \infty}\frac{|\B_{G_U,X_U}(n)|}{|\B_{G_V,X_V}(n)|}.$$
The right hand side is equal to $0$ since either (i) $\CR_{X_U}(G_U)=0$ by induction, or (ii) $G_U$ is free abelian (so of polynomial growth); if (ii), since $G$ itself if not a direct product by assumption, it is of exponential growth, and the last fraction is $0$.

It remains to find $\limsup_{n \rightarrow \infty} \frac{|C_{G_V,=X_V}(n)|}{|\B_{G_V,X_V}(n)|}$, the second part of the right hand side of (\ref{frac}). Since $G$ is not a direct product, all conjugacy representatives with support exactly $X_V$ are non-split, so it suffices to consider cyclic normal forms up to cyclic permutations, that is $$\frac{|C_{G_V,=X_V}(n)|}{|\B_{G_V,X_V}(n)|}\leq\frac{|\cycrep(\cycsl(G_V,X_V)(n)|}{|\sphl(G,X_V)(n)|}=$$
$$\frac{|\cycrep(\cycsl(G_V,X_V)(n)|}{|\cycsl(G,X_V)(n)|}\frac{|\cycsl(G_V,X_V)(n)|}{|\sphl(G,X_V)(n)|},$$ 
and by Proposition \ref{decycle} applied to the language $\cycsl(G_V,X_V)$ (which satisfies the hypothesis of Proposition \ref{decycle} by Lemma \ref{twoconditions}) 
$$\lim_{n\rightarrow \infty}\frac{|\cycrep(\cycsl(G_V,X_V)(n)|}{|\cycsl(G,X_V)(n)|}=0.$$
This proves the result.
\end{proof}

\section{Reflections and open questions}

Our results on the conjugacy ratio values are essentially identical to those on the degree of commutativity in \cite{dcA, Cox3, MV17}. That is, the two quantities are equal for all the classes of groups we studied. However, we could not establish a direct general link between them. 

\begin{quR}
Is the limsup in the definition of the conjugacy ratio a limit?
\end{quR}

\begin{quR}
What are the groups for which $\dc_X(G) \leq \CR_X(G)$ (or vice versa)? They are equal in the virtually nilpotent case, in the hyperbolic group case and in many more. 
\end{quR}

As is the case for the degree of commutativity, we do not know whether the conjugacy ratio might be influenced by a change of generators.
\begin{quR}
Does there exist a group $G$ with finite generating sets $X$ and $Y$ such that $\CR_X(G)\neq \CR_Y(G)$?
\end{quR}

Finally, it would be interesting to unify the proofs confirming our conjecture for larger classes of groups, such as all groups of exponential growth, for example.


\bibliographystyle{amsalpha}
\def\cprime{$'$}
\providecommand{\bysame}{\leavevmode\hbox to3em{\hrulefill}\thinspace}
\providecommand{\MR}{\relax\ifhmode\unskip\space\fi MR }
\providecommand{\MRhref}[2]{%
  \href{http://www.ams.org/mathscinet-getitem?mr=#1}{#2}
}
\providecommand{\href}[2]{#2}
\end{document}